 \newcounter{mainthm}
\newtheoremstyle{thm_style}
{10pt}
{7pt}
{\itshape}
{}
{\bfseries}
{}
{.5em}
{}
\theoremstyle{thm_style}
\newtheorem{thm}{Theorem}[section]
\newtheorem{lem}[thm]{Lemma}
\newtheorem{prop}[thm]{Proposition}
\newtheorem{cor}[thm]{Corollary}
\newtheorem{defn-thm}[thm]{Definition-Theorem}
\newtheorem{defn-lem}[thm]{Definition-Lemma}
\newtheorem{defn}[thm]{Definition}
\newtheoremstyle{rmk}
{5pt}
{5pt}
{}
{}
{\bfseries\itshape}
{}
{.5em}
{}
\theoremstyle{rmk}
\newtheorem{rmk}[thm]{Remark}
\newtheoremstyle{note}
{5pt}
{5pt}
{\itshape}
{10pt}
{\bfseries}
{}
{.5em}
{}
\theoremstyle{note}
\newcommand{\mathds}[1]{\text{\usefont{U}{dsrom}{m}{n}#1}}
\newcommand{\one}{\mathds {1}}
\newcommand{\m}{\mathfrak m}
\newcommand{\ml}{\mathfrak l}
\newcommand{\mk}{\mathfrak k}
\newcommand{\q}{\mathfrak q}
\titleformat{\subsubsection}[runin]{\itshape\normalsize}{\S \thesubsubsection\ }{0em}{}[\mbox{. } ]
\setlist[description]{font=\normalfont\itshape\textbullet\space}
\scriptsize \vspace{0pt}}%
\footnotesize \vspace{1pt}}%
\titleformat{\subsection}[runin]{
\bfseries
\itshape
	\normalsize}{\thesubsection) }{0em}{}[\mbox{ . } ]
\numberwithin{equation}{section}
\newcommand{\BetweenLarge}{\fontsize{13.5pt}{16.2pt}\selectfont}
\begin{document}
\title
[
Cyclic brace relation and Batalin-Vilkovisky structure on open-closed Hochschild cohomology
]{ 
\BetweenLarge
Cyclic brace relation and BV structure on open-closed Hochschild cohomology 
}

\author[Hang Yuan]{ Hang Yuan }

\begin{abstract} {\sc Abstract:}
For an open-closed homotopy algebra (OCHA), the previous work indicates that there is an open-closed version of Hochschild cohomology with a canonical Gerstenhaber algebra structure.
If this OCHA is further cyclic and unital in the sense of Kajiura and Stasheff, we produce a BV algebra structure on this cohomology via a cochain-level identity formulated with cyclic brace operations.
\end{abstract}

\maketitle
%
%


\hypersetup{
	colorlinks=true,
	linktoc=all,
	citecolor=gray
}

\setlength{\parindent}{5.5mm}	\setlength{\parskip}{0em}

\section{Introduction}

Let $X$ be a Calabi-Yau manifold. The space of polyvector fields on 
$X$ carries a Batalin-Vilkovisky (BV) algebra structure \cite{schechtman1998remarks, ran1997thickening}, analogous to how the space of polyvector fields on a smooth manifold forms a Gerstenhaber algebra.
The Hochschild cochain complex 
\begin{equation}
	\label{C_AA_intro_eq}
C^\bullet(A,A)=\prod_{k\geqslant 1} \mathrm{Hom} (A^{\otimes k}, A)
\end{equation}
of an associative or $A_\infty$ algebra 
$A$ serves as the non-commutative geometric analogue of polyvector fields. Its cohomology $HH(A,A)$ is called \textit{Hochschild cohomology} and naturally inherits a Gerstenhaber algebra structure \cite{Gerstenhaber_1963cohomology}.
A BV algebra can be viewed as an algebra over the homology of the framed little disk operad \cite{getzler1994batalin}; explicitly, it is a Gerstenhaber algebra $(G,\smallsmile, [,])$ further equipped with an operator $\Delta$ such that $\Delta^2=0$ and for any $a,b\in G$,
\begin{equation}
	\label{BV_eq_intro}
	[a, b] = \Delta (a \smallsmile b)  + \Delta a \smallsmile b - (-1)^{|a||b|} \   \Delta b \smallsmile a
\end{equation}

A BV algebra structure exists on the Hochschild cohomology of certain special classes of associative or $A_\infty$ algebras.
Tradler first established such a structure for unital associative or $A_\infty$ algebras with a symmetric inner product \cite{tradler2008batalin} (see also \cite{menichi2009batalin}). Ginzburg proved that the Hochschild cohomology of a Calabi-Yau algebra also admits a BV structure \cite{ginzburg2006calabi}.
Lambre introduced the notion of a differential calculus with duality that explains when BV structure exists and unifies the two cases of symmetric algebras and Calabi-Yau algebras \cite{lambre2009dualit}.
Besides, BV algebra structures persist for twisted Calabi-Yau algebras and Frobenius algebras with semisimple Nakayama automorphisms, studied by Kowalzig-Kr\"ahmer \cite{kowalzig2014batalin} and Lambre-Zhou-Zimmermann \cite{lambre2016hochschild} respectively.

The Hochschild cochain complex of an associative or $A_\infty$ algebra $(A,\m)$ carries a family of operations known as \textit{braces} or brace operations
$
D, E_1,\dots, E_n \rightsquigarrow  D\{E_1,\dots, E_n \}
$ (see \cite{Kadeishvili_1988structure, Getzler_1993cartan}).
The cup product $\smallsmile$ and Gerstenhaber bracket $[  ,  ]$ on Hochschild cohomology $HH(A,A)$  admit cochain-level descriptions via brace operations:
$D\smallsmile E = \m\{D, E\} $ and $ [D,E]=D\{E\} \pm E\{D\}$.
In fact, the Gerstenhaber algebra structure, such as the compatibility condition between $\smallsmile$ and $[,]$, can be entirely derived from the so-called \textit{brace relation} at the cochain level; see e.g. \cite{Yuan_ocha_hochschild}.
Now, if $(A,\m)$ is cyclic or Calabi-Yau, while we know $HH(A,A)$ carries a BV algebra structure, we should be also able to find identities on Hochschild \textit{cochains} that induces the BV relation (\ref{BV_eq_intro}) upon passing to Hochschild \textit{cohomology}.

\subsection{Main result}
Given a non-degenerate bilinear form $\omega:A\otimes A\to \Bbbk$, one can define the \textit{cyclic brace operation}
$
D\{E_1,\dots, E_s, \Delta, E_{s+1},\dots, E_n \}
$
(see Definition \ref{cyclic_brace_defn}) which jointly generalizes the BV-operator $\Delta$ (see (\ref{Delta_eq})) and the usual brace operation.
Indeed, $D\{\Delta\}= \Delta D$. This essentially comes from the notion of symbols in \cite[Definition 14]{tradler2008batalin} and 
the notion of spined braces in \cite[Section 4.1]{ward2012cyclic}.
We note that the cyclic braces admit a natural extension to the open-closed Hochschild-type space (\ref{C_Hochschild_ZA_intro_eq}). This is then used to prove our main result below.

For any Hochschild cochains $D, E$, the cyclic braces allow us to achieve a cochain-level identity underlying the BV relation (Theorem \ref{[DE]_thm}):
\begin{equation}
	\label{[DE]_intro}
	\begin{aligned}
		[D, E] &= \Delta (\m\{D, E\} ) + \m \{ \Delta D, E\} - (-1)^{|D||E|} \m\{\Delta E, D\}  \\
		&\qquad\qquad\qquad\qquad +
		\delta \left( D\{E,\Delta\} \right) - (-1)^{|D||E|}  \delta\left( E\{D, \Delta\}  \right) \\
		&\qquad\qquad\qquad\qquad +
		(\delta D) \{E, \Delta\} - (-1)^{|D||E|} (\delta E) \{D, \Delta\} \\
		&\qquad\qquad\qquad\qquad +
		(-1)^{|D|} \left( D\{\delta E, \Delta\} - (-1)^{|D||E|} E\{\delta D, \Delta\} \right)
	\end{aligned}
\end{equation}
where $\delta$ is the Hochschild differential.
If $\delta D =\delta E=0$, then passing to the Hochschild cohomology recovers the desired BV relation (\ref{BV_eq_intro}). 

The cochain-level identity (\ref{[DE]_intro}) does not appear explicitly in the literature, but its underlying ideas are likely implicit in existing work. For the associative algebra case restricted to $\delta$-closed elements, see \cite[Lemma 5]{tradler2008batalin}.
Nevertheless, the use of cyclic braces provides a more convenient framework, and the explicit formula above is useful for extending the standard BV structure \cite{tradler2008batalin,menichi2009batalin} to open-closed homotopy algebras (OCHA).

\begin{thm}
	\label{main_thm_intro}
	The normalized open-closed Hochschild cohomology $\overline{HH}(Z; A, A)$ of a unital and cyclic OCHA admits a canonical BV algebra structure. 
\end{thm}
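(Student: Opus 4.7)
The plan is to transport the cochain-level identity (\ref{[DE]_intro}) from purely open cochains to the open-closed complex $C^\bullet(Z;A,A)$, since the Gerstenhaber part of the structure on $\overline{HH}(Z;A,A)$ is already available from the preceding sections (cf.\ \cite{Yuan_ocha_hochschild}) and only the operator $\Delta$ and the BV relation remain to be produced. The cyclic pairing $\omega$ on $A$ together with unitality lets me define $\Delta$ on an open-closed cochain $D$ by the same formula (\ref{Delta_eq}) used in the purely open case: a cyclic rotation of the $A$-inputs, carrying any $Z$-inputs along passively, followed by pairing the first and last $A$-slots via $\omega$. Because $Z$-inputs play no role in the cyclic structure on the open boundary, unitality shows that $\Delta$ descends to the normalized subcomplex $\overline{C}^\bullet(Z;A,A)$, and the identities $\Delta^2=0$ and $\Delta\delta+\delta\Delta=0$ reduce formally to their purely open analogues, using that each structure map $\m_{k,\ell}$ of the OCHA interacts with the cyclic rotation only through its open outputs.

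With $\Delta$ in place, I would extend the cyclic brace operation $D\{E_1,\dots,E_s,\Delta,E_{s+1},\dots,E_n\}$ of Definition~\ref{cyclic_brace_defn} to the open-closed setting by inserting the $\Delta$-slot only into the open part of the composition, while distributing the closed inputs among $D,E_1,\dots,E_n$ in the usual manner for open-closed braces. The manipulations behind Theorem~\ref{[DE]_thm} are entirely formal, resting only on the brace relation and the cyclic property of $\omega$, both of which persist in $C^\bullet(Z;A,A)$. Consequently the cochain identity (\ref{[DE]_intro}) transfers verbatim to the open-closed complex; restricting to $\delta$-closed cochains and descending to $\overline{HH}(Z;A,A)$ then produces the BV relation (\ref{BV_eq_intro}), and combined with $\Delta^2=0$ this upgrades the existing Gerstenhaber structure to a canonical BV algebra, because every ingredient (the pairing $\omega$, the unit, and the OCHA maps $\m_{k,\ell}$) is part of the given data.

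The hard part, which I expect to be bookkeeping rather than conceptual, is organising the closed-input combinatorics: the open-closed brace is a sum over set-partitions of the $Z$-inputs, and interleaving this with the cyclic rotation on the open side requires tracking Koszul and shuffle signs carefully so that each term of the extended identity (\ref{[DE]_intro}) matches the corresponding term in the open-only derivation of Theorem~\ref{[DE]_thm}. No new cancellation beyond the open case is required, since the closed inputs act as inert labels carried through a formal identity of cyclic braces; nevertheless, verifying this rigorously demands a case analysis keyed on whether each closed input sits on $D$, on some $E_i$, or on the $\Delta$-slot, for each of the eight summands on the right-hand side of (\ref{[DE]_intro}), and a careful check that insertion of the unit $e$ as either an open or closed argument kills the corresponding term of the cyclic sum.
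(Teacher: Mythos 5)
Your overall route is essentially the one the paper takes: define $\Delta$ on $C^{\bullet,\bullet}(Z;A,A)$ by the cyclic rotation formula (\ref{Delta_eq}), restrict to the normalized subcomplex so that $\Delta^2=0$, prove the cochain-level identity (\ref{[DE]_intro}) by cyclic-brace manipulations, and combine this with the already-established Gerstenhaber structure. The paper does not literally ``transfer'' the identity from the purely open case, however: it defines the first- and second-order cyclic braces directly on the open-closed complex and derives everything from the cyclic brace relations (Theorem \ref{cyclic_brace_relation_thm}), the interchange property for cyclic cochains (Proposition \ref{cyclic_brace_with_omega_cyclic_prop}), and the unitality computations of Lemmas \ref{q_first_order_brace_lem} and \ref{q_second_order_brace_lem}; your ``inert closed labels'' heuristic is the same content in different packaging.

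One claim in your outline is too strong: that ``no new cancellation beyond the open case is required.'' The open-closed differential contains the closed-string action term $\pm\,\widehat{\ml}(D)$, which has no counterpart when $\ml=0$, so the $\widehat{\ml}$-contributions hidden in $\delta\big(D\{E,\Delta\}\big)$, $(\delta D)\{E,\Delta\}$ and $D\{\delta E,\Delta\}$ must cancel among themselves; this is precisely the derivation-type compatibility $\widehat{\ml}\big(D\{E,\Delta\}\big)=(-1)^{|E|}\,\widehat{\ml}(D)\{E,\Delta\}+D\{\widehat{\ml}(E),\Delta\}$ of Lemma \ref{cyclic_brace_with_closed_string_action_lem}, which is also what gives the commutation of $\widehat{\ml}$ with $\Delta$ used in Proposition \ref{Delta_square_zero_prop}. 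It is an easy statement, but it is a genuinely new ingredient that your plan should isolate and prove rather than subsume under sign bookkeeping. Two smaller corrections: in the paper's conventions $\Delta$ and $\delta$ commute, $\delta\circ\Delta=\Delta\circ\delta$, not anticommute; and the unit $\one$ lives in $A$ only, so there is no ``insertion of the unit as a closed argument'' --- what you actually need is the unitality axiom $\q_{\ell,k}(\dots;\dots\one\dots)=0$ for $(\ell,k)\neq(0,2)$, which is what collapses all cyclic-brace computations involving $\q$ to the $\q_{0,2}$ case and yields $\Delta\q=0$, $\q\{D,\Delta\}=D$, and $\q\{D\{\lozenge\}\}=0$.
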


The notion of \textit{{open-closed homotopy algebra}} ({OCHA}) is introduced by Kajiura and Stasheff \cite{ocha_kajiura_cmp,ocha_kajiura_survey}, drawing inspiration from Zwiebach's open-closed string field theory \cite{zwiebach1998oriented}.
Given an arbitrary OCHA, we have developed an analogue of Hochschild cohomology in \cite{Yuan_ocha_hochschild} and established its Gerstenhaber algebra structure.
Besides, following Kajiura and Stasheff, we can also introduce \textit{unital} and \textit{cyclic} OCHAs (see Definition \ref{unital_defn}).

Given a pair of spaces $(Z, A)$, we consider the Hochschild-type space
\begin{equation}
	\label{C_Hochschild_ZA_intro_eq}
	C^{\bullet,\bullet}(Z; A, A)=\prod_{\ell,k\geqslant 0, (\ell,k)\neq (0,0)} \mathrm{Hom}(Z^{\wedge \ell}\otimes A^{\otimes k} , A)
\end{equation}
and the Chevalley-Eilenberg-type space
\begin{equation}
	\label{C_Chevally-Eilenberg_Z_intro_eq}
	\tilde C^{\bullet} (Z, Z) =
	\prod_{\ell\geqslant 1} \mathrm{Hom}(Z^{\wedge \ell}, Z)
\end{equation}
where the superscript $\wedge \ell$ indicates the multilinear maps are graded symmetric for the inputs from a graded vector space $Z$.
By an \textit{open-closed homotopy algebra (OCHA)} on $(Z,A)$, we mean a pair of an $L_\infty$ algebra $\ml=\{\ml_\ell\}\in \tilde C^\bullet (Z, Z)$ and a family of multilinear maps $\q=\{\q_{\ell,k}\}$ in $ C^{\bullet,\bullet}(Z; A, A)$ with certain compatibility conditions.
Equivalently, by the coalgebra description \cite{ocha_hoefel2012coalgebra}, it is a degree one coderivation $\mathcal D$ on $\Lambda^cZ\oplus T^c A$ such that $\mathcal D^2=0$, where $\Lambda^c Z$ and $T^cA$ are the graded symmetric coalgebra and tensor coalgebra.
The OCHA provides a unified mathematical structure that combines:
(i) Open string interactions governed by 
$A_\infty$ algebras; (ii)
Closed string interactions described by $L_\infty$ algebras.
An OCHA $(\ml,\q)$ naturally gives rise to an open-closed Hochschild differential $\delta=\delta_{\ml,\q}$ on the space $C^{\bullet,\bullet}(Z; A, A)$ in the form (see (\ref{delta_ocha_eq})):
\begin{align*}
	& \delta (D) (z_1,\dots , z_\ell ; a_1,\dots a_k) \\ 
	&= \sum \pm \q(z_{J_1}; a_1,\dots, D(z_{J_2}; \dots) , \dots,  a_k) \pm D(z_{J_1}; a_1,\dots, \q(z_{J_2}; \dots) , \dots,  a_k) \pm \q(\ml(z_{J_1})\wedge z_{J_2} ; \dots)
\end{align*}
where $J_1\sqcup J_2=\{1,\dots, \ell\}$.
The cohomology $HH(Z; A, A)$ of this $\delta$-complex
is called the \textit{open-closed Hochschild cohomology}, was shown to carry a canonical Gerstenhaber algebra structure \cite{Yuan_ocha_hochschild} .
We remark that the cochain-level identity in (\ref{[DE]_intro}) still holds true if we replace $\m$ with $\q$ and take this open-closed Hochschild differential $\delta$.

Suppose an OCHA $(Z,A,\ml,\q)$ is \textit{cyclic} in the sense of Kajiura-Stasheff in \cite[Definition 15]{ocha_kajiura_cmp} and is \textit{unital} with a unit $\one\in A$ (see Definition \ref{unital_defn}).
Just as the case of associative or $A_\infty$ algebras, an open-closed Hochschild cochain $D$ in $C^{\bullet,\bullet}(Z; A, A)$ is called \textit{normalized} if it vanishes whenever $\one$ is an input (cf. \cite[1.5.7]{loday2013cyclic}).
One can show that the subspace $\overline{C}^{\bullet,\bullet}(Z; A, A)$ of normalized Hochschild cochains is preserved by the differential $\delta$.
The resulting cohomology of this subspace is denoted by $\overline{HH}(Z; A, A)$ and is called the normalized open-closed Hochschild cohomology; see Section \ref{s_normalized_Hochshild}.

\begin{rmk}
	One simple example of OCHA is the following case. Let $f:Z\to A$ be a cochain map from a cochain complex $(Z,d_Z)$ to a differential graded algebra (dga) $(A, d_A, \cdot)$; cf. \cite[Example 4.5]{Yuan_ocha_hochschild}.
	This data gives rise to an OCHA structure $(\ml,\q)$ by assigning $\ml_1=d_Z$, $\q_{0,1}=d_A$, $\q_{1,0}=f$, and $\q_{0,2}(a_1,a_2)= \pm a_1\cdot a_2$.
	Moreover, let $\varphi :N\to M$ be a smooth map between oriented manifolds, and consider the special case $Z=\Omega^*(M)$, $A=\Omega^*(N)$, and $f=\varphi^*$. Then, the data $(f, Z,A)$ induces a unital and cyclic OCHA structure. The unit is given by the constant-one function, and the pairing for the cyclic condition is $\omega(\alpha_1,\alpha_2)= \pm \int_N \alpha_1\wedge \alpha_2$.
	Via the Hochschild homology of this OCHA, we study the iterated integral model of the relative disk mapping space consisting of pairs $(\Phi,\gamma)$ of maps $\Phi\colon \mathbb D\to M$ and $\gamma\colon S^1\to N$ such that $\Phi|_{\partial\mathbb D}=f\circ\gamma$; see \cite{Yuan_Yi_relative_disk}.
	Following Irie’s de Rham chain model of string topology \cite{irie2020chain}, we further expect that one could construct an open-closed version of this model, in which the open-closed Connes operator \(B\) from \cite{Yuan_Yi_relative_disk} should be dual to the BV operator \(\Delta\) considered in this paper \eqref{Delta_eq}.
\end{rmk}

Given an OCHA $(\ml,\q)$ as above, one can verify that the subcollection $\m=\q_{0,\bullet}=\{\m_k:=\q_{0,k}\}_{k\geqslant 1}$ of $\q$ is an $A_\infty$ algebra.
Conversely, in this way, \textit{any $A_\infty$ algebra can be viewed as an OCHA}.
Moreover, if the OCHA is cyclic (resp. unital), then the above $\m=\q_{0,\bullet}$ is also a cyclic (resp. unital) $A_\infty$ algebra; cf. \cite[2.11]{kajiura2007noncommutative}, \cite[2.8]{keller2006A}.
Thus, we can retrieve:

\begin{cor}
	\label{cor_intro}
	Given a cyclic unital $A_\infty$ algebra $A$, the normalized Hochschild cohomology is a BV algebra.
\end{cor}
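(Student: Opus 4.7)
The plan is to realize a cyclic unital $A_\infty$ algebra as a degenerate OCHA with trivial closed sector and then invoke Theorem \ref{main_thm_intro}. Concretely, I would set $Z=0$, so that $Z^{\wedge 0}=\Bbbk$ and $Z^{\wedge \ell}=0$ for $\ell\geqslant 1$; then the open-closed Hochschild-type space (\ref{C_Hochschild_ZA_intro_eq}) collapses to $\prod_{k\geqslant 1}\mathrm{Hom}(A^{\otimes k},A)=C^\bullet(A,A)$, the ordinary Hochschild cochain complex of (\ref{C_AA_intro_eq}).

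First I would define the candidate OCHA on $(0,A)$: the $L_\infty$ structure $\ml$ on $Z=0$ is the zero map, the mixed operations $\q_{\ell,k}$ with $\ell\geqslant 1$ are declared zero, and $\q_{0,k}:=\m_k$ for all $k\geqslant 1$. Via the coderivation description $\mathcal D^2=0$ on $\Lambda^cZ\oplus T^c A=\Bbbk\oplus T^cA$, the only non-vacuous relations reduce to the $A_\infty$ relations for $\m$, which hold by assumption, so $(\ml,\q)$ is a genuine OCHA. The cyclic and unital conditions of Kajiura--Stasheff restrict on the open sector to the cyclicity and unitality of $\m$, so both properties transfer directly from $A$; this is exactly the dictionary $\m=\q_{0,\bullet}$ invoked in the paragraph preceding the statement, read in reverse.

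Next, I would apply Theorem \ref{main_thm_intro} to this OCHA to obtain a BV algebra structure on $\overline{HH}(0;A,A)$, and identify it with $\overline{HH}(A,A)$. In the formula for $\delta_{\ml,\q}$ displayed in the excerpt, every summand containing a $z$-input or $\ml$ drops out, leaving precisely the classical Hochschild differential for $\m$. Inspecting Definition \ref{cyclic_brace_defn} and the identity (\ref{[DE]_intro}) with $\ml=0$ and $\q_{0,k}=\m_k$, one sees that the cup product $\m\{D,E\}$, the Gerstenhaber bracket, and the cyclic BV operator $\Delta$ all restrict to their standard $A_\infty$ counterparts on $C^\bullet(A,A)$. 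The normalization condition (vanishing when $\one$ is inserted) is the literal classical one, so $\overline C^{\bullet,\bullet}(0;A,A)=\overline C^\bullet(A,A)$ as subcomplexes.

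The main obstacle will be purely bookkeeping: verifying that the $Z=0$ specialization of each ingredient (the Hochschild differential, cup product, bracket, BV operator, and their compatibility into the cochain identity (\ref{[DE]_intro})) matches the classical expressions with the correct signs, and that the normalization passes through this identification. No new conceptual input is required beyond the OCHA-on-$(0,A)$ realization of $A$ and the main theorem.
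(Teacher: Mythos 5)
Your proposal is correct and follows essentially the same route as the paper: the corollary is obtained by viewing the cyclic unital $A_\infty$ algebra as an OCHA with trivial closed sector ($\ml=0$, $\q_{\ell,k}=0$ for $\ell>0$, $\q_{0,k}=\m_k$), under which the normalized open-closed Hochschild complex, differential, cup product, bracket, and BV operator reduce to their classical counterparts, and then invoking Theorem \ref{main_thm_intro}. Your explicit choice $Z=0$ and the sign/bookkeeping checks only spell out what the paper leaves implicit.
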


\begin{rmk}
We work with \textit{normalized} Hochschild cohomology solely to ensure that the BV operator $\Delta$ squares to zero. The normalized and non-normalized cochain complexes are often quasi-isomorphic. For associative algebras, Hochschild cohomology arises as the left derived functor of the Hom functor, with the normalized version remaining isomorphic via the normalized bar resolution \cite[1.5.7]{loday2013cyclic}, \cite[Exercise 3.2.3]{khalkhali2013basic}. However, for more general structures (OCHAs, dgAs, or $A_\infty$ algebras), the case is likely more subtle.
In the cyclic bar complex context, an extra connectivity condition on the $A_\infty$ algebra may be required to obtain a similar quasi-isomorphism \cite[Theorem 5.2]{getzler1990algebras}. Tradler’s result for $A_\infty$ algebras also appears to rely on a quasi-isomorphism between the normalized and non-normalized complexes but omits details \cite[Page 2373]{tradler2008batalin}.
While we expect the quasi-isomorphism to hold under reasonable conditions, to avoid introducing extra assumptions, we work directly with normalized complexes.
\end{rmk}

\begin{rmk}
A natural generalization of Theorem \ref{main_thm_intro} would relax the cyclic condition to certain Calabi-Yau condition. While Calabi-Yau algebras are well-studied (see Ginzburg’s seminal work \cite{ginzburg2006calabi}), various versions of Calabi-Yau $A_\infty$-algebras have been studied in the literature (e.g., \cite{kontsevich2008notes}, \cite[Definition 3.1.3]{ginzburg2006calabi}).
One motivation for this work comes from the non-archimedean SYZ constructions in \cite{Yuan_I_FamilyFloer}, where families of $A_\infty$-algebras associated with Lagrangian fibrations were used to construct a canonical mirror dual fibration. To further equip the mirror with a Calabi-Yau volume form, it seems reasonable to explore families of Calabi-Yau $A_\infty$-algebras (as appropriately defined). This motivates our search for a suitable definition of Calabi-Yau $A_\infty$-algebras and potentially Calabi-Yau OCHAs (a notion not yet found in the literature) that are compatible with the above ramework.
We speculate that the holomorphic volume form on the total space of a Lagrangian fibration could interact with the $A_\infty$-algebras associated with Lagrangian fibers via the evaluation maps at the interior marked points for the relevant holomorphic disks. 
Also, we propose that a suitable definition of Calabi-Yau $A_\infty$-algebras should algebraically endow the associated Hochschild cohomology with a BV algebra structure, and geometrically align with Fukaya’s $A_\infty$-algebras for Lagrangians in a Calabi-Yau manifold. 
The cyclic condition would be then a special case of an appropriate Calabi-Yau condition.
\end{rmk}

\subsection{Cyclic brace relations}

A useful object of our study of Theorem \ref{main_thm_intro} is a structure we call \textit{cyclic braces}, which can be defined both in the open-string case (\ref{C_AA_intro_eq}) and in the open-closed-string case (\ref{C_Hochschild_ZA_intro_eq}). 
Let's briefly describe the former case.
Let $A$ be a graded vector space over a field $\Bbbk$, and fix an element $\one\in A$. For now, we do \textit{not} assume that $A$ carries any (unital) associative or $A_\infty$-algebra structure, nor that $\one$ serves as a unit.

Suppose there is a non-degenerate, skew-symmetric bilinear map $\omega:A\otimes A\to \Bbbk$.
Using the non-degeneracy of $\omega$, we define the following operation (Definition \ref{cyclic_brace_defn}):
For any $D, E_1,\dots, E_m\in C^\bullet (A, A)$, we construct a new element $D\{E_1,\dots, E_m, \Delta \}$ in $C^\bullet(A, A)$ by
\[
\omega \big(D\{E_1,\dots, E_m,  \Delta \} (a_1,\dots, a_k) , \ a_0 \big)
=
\sum \pm \omega \big(D(a_{r+1}, \dots, E_1 ( \ ) ,\dots, E_m (\ ),\dots, a_0,\dots, a_r ) \ , \ \one \big)
\]
In other words, up to sign, we consider the sum over all cyclic permutations of the inputs $a_1,\dots, a_k, a_0$, while the "ghost" symbol $\Delta$ serves to anchor the position of $a_0$ relative to the 
$E_i$'s.
Moreover, we can similarly define $D\{E_1,\dots, E_s, \Delta, E_{s+1},\dots, E_m\}$ for different anchor of $a_0$.
In particular, the BV-operator $\Delta$ (see (\ref{Delta_eq})) can be viewed as $\Delta(D)=D\{\Delta\}$, which also motivates our notation.

Recall that the standard brace relation specifies the expansion formula for the iterated brace operation
\[
D\{E_1,\dots, E_m\} \{F_1,\dots, F_n\}
\]
where braces are applied twice (see, e.g., \cite[Proposition 2.3.2]{tamarkin2005ring}).
In our situation, combining the above cyclic brace with the usual brace, we may encounter the following iterated brace operations
\begin{align*}
	&D\{E_1,\dots, E_m, \Delta \} \{F_1,\dots, F_n\} \\
	&D\{E_1,\dots, E_m\} \{F_1,\dots, F_n, \Delta\}
\end{align*}

It turns out that the expansion formula involves two distinct types of cyclic brace operations: the \textit{first-order cyclic brace} (Definition \ref{cyclic_brace_defn}), defined as above, and the \textit{second-order cyclic brace} (Definition \ref{cyclic_brace_diamond_defn}); see also the work of \cite{ward2012cyclic}.
We describe the latter for a simple case below: Given $D, E,F\in C^\bullet (A, A)$, we define $D\{E\{F, \lozenge \} \}$ in $C^\bullet(A, A)$ by
\[
\omega\big(D\{E\{F,\lozenge \}\}(a_1,\dots, a_k), a_0 \big) = \sum \pm \omega\big(
D(a_{r+1},\dots, E(\dots, F( \ ) ,\dots, a_0, \dots) , \dots, a_r)  \ , \ \one \big)
\]
Here, a similar but different symbol $\lozenge$ also anchors the position of $a_0$. However, the difference is that it simultaneously occupies positions within two nested braces. For example, we emphasize that in general, $D\{E\{F, \lozenge\}\} \neq D\{E\{F, \Delta\}\}$, where the latter is interpreted by first forming $G:=E\{F, \Delta\}$ (a first-order cyclic brace operation) and then applying the usual brace operation $D\{G\}$.
Higher-order cyclic braces could likely be defined similarly, but they are not needed for our purpose of establishing the BV structure on open-closed Hochschild cohomology (Theorem \ref{main_thm_intro}).
Now, we establish an explicit expansion formula for these operations, which we call the \textit{cyclic brace relations}:

\begin{thm}	[Cyclic Brace Relations]
	\label{cyclic_brace_relation_thm}
	Given Hochschild cochains $D,E_1,\dots, E_m,F_1,\dots, F_n $, we have the following two types of cyclic brace relations.
		\begin{align*}
		&\mathrm{(i)} \quad D\{E_1,\dots, E_m\} \{F_1,\dots, F_n,\Delta \} \\[1em]
		&=
		\sum_s \quad
		\sum_{\substack{i_1\leqslant j_1\leqslant \cdots \leqslant i_s\leqslant j_s}} \quad \   (-1)^{t} \ D \Big\{F_1,\dots, F_{i_1}, E_1\{F_{i_1+1}, \dots, F_{j_1}\}, F_{j_1+1},\dots, F_{i_2}, E_2\{F_{i_2+1},\dots, F_{j_2}\}, \dots, \\[-1em]
		& \qquad\qquad\qquad\qquad\qquad \qquad \qquad  , F_{i_s}, E_s\{ F_{i_s+1}, \dots, F_{j_s}\} , F_{j_s+1},\dots, F_n , \Delta, E_{s+1},\dots, E_m \Big \} \\[1.5em]
		&+
		\sum_s
		\sum_{\substack{i_1\leqslant j_1\leqslant \cdots \leqslant i_{s-1} \leqslant j_{s-1}\leqslant i_s}} (-1)^{t} \
		D \Big\{F_1,\dots, F_{i_1}, E_1\{F_{i_1+1}, \dots, F_{j_1}\}, F_{j_1+1},\dots, F_{i_2}, E_2\{F_{i_2+1},\dots, F_{j_2}\}, \dots, \\[-1em]
	& \qquad\qquad\qquad\qquad\qquad \qquad \qquad  , F_{i_{s}}, E_{s} \{ F_{i_{s}+1}, \dots, F_{n}, \lozenge \}   , E_{s+1},\dots, E_m \Big \}\\[-0.7em]
	\end{align*}
	where the signs are
	\[
	t= \sum_{k=1}^s |E_k| \sum_{a=1}^{i_k} |F_a| + \sum_{k=s+1}^m |E_k| \sum_{a=1}^{n}|F_a| 
	\]
	\vspace{0.2em}
		\begin{align*}
		& \mathrm{(ii)} \quad D\{E_1,\dots, E_m, \Delta\} \{F_1,\dots, F_n\} \\
		&=
		\sum_{r=0}^n
		\sum_{r+1\leqslant i_1\leqslant j_1\leqslant \cdots \leqslant i_m\leqslant j_m\leqslant n} (-1)^{\tau} \ D \Big\{F_{r+1},\dots, F_{i_1}, E_1\{F_{i_1+1}, \dots, F_{j_1}\}, F_{j_1+1},\dots, F_{i_2}, E_2\{F_{i_2+1},\dots, F_{j_2}\}, \dots, \\[-1em]
		& \qquad\qquad\qquad\qquad\qquad \qquad \qquad  , F_{i_m}, E_m\{ F_{i_m+1}, \dots, F_{j_m}\} , F_{j_m+1},\dots, F_n , \Delta, F_1,\dots, F_r \Big \} 
	\end{align*}
	with the sign
	\[
	\tau=\sum_{k=1}^m|E_k| \sum_{a=r+1}^{i_k}|F_a| +  \sum_{p=1}^r \sum_{q=r+1}^n |F_p|  |F_q|
\]
\end{thm}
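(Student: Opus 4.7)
The plan is to prove both (i) and (ii) directly by unfolding the definitions and comparing the two sides term-by-term. Since $\omega$ is non-degenerate, two multilinear maps in $C^\bullet(A,A)$ agree iff their pairings with $\one$ coincide on all inputs after applying an arbitrary test vector $a_0$. Unfolding the definitions of the standard brace and the first- and second-order cyclic braces, each side of either identity becomes a sum of terms of the form $\pm\omega(D(\ldots),\one)$, whose argument list interleaves a cyclic rotation of the $a_j$'s with $F_i$-blocks, $E_i$-blocks, and the marked slot $a_0$.

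For (ii), set $H = D\{E_1,\dots,E_m,\Delta\}$ and expand one brace at a time. The outer standard brace first interleaves $F_i$-blocks among the $a_j$'s into a sequence $c_1,\dots,c_M$, and the inner cyclic brace on $H$ then cyclically rotates the $c$-sequence and inserts the $E_i$-blocks consuming consecutive entries, with $a_0$ placed after all $E_i$'s. The key combinatorial observation is that a cyclic rotation of the $c$-sequence decomposes uniquely as a pair of rotations, one on the $a$-subsequence and one on the $F$-subsequence, since the $a_j$'s and $F_i$-blocks retain their respective internal orders. This decomposition matches the double sum on the RHS, where the cyclic brace on $D$ encodes the $a$-rotation (via $\Delta$) and the explicit index $r$ encodes the $F$-rotation. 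The second term in $\tau$ is the Koszul sign produced by the $F$-rotation, while the first term is the sign of moving each $E_k$ past the $F$'s which precede it in the final arrangement.

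For (i), expand the LHS in the opposite order: the outer cyclic brace on $G = D\{E_1,\dots,E_m\}$ first places $F_i$-blocks and $a_0$ into $G$'s input list over cyclic rotations of the $a_j$'s (with $a_0$ after all $F$'s), and the inner standard brace of $G$ then inserts $E_i$-blocks consuming consecutive entries. A case split on whether $a_0$ lies inside an $E_i$-block completes the identification with the RHS. If no $E_i$ consumes $a_0$ (Case A), then $a_0$ sits at the $D$-level between some $E_s$ and $E_{s+1}$; since $a_0$ is after all $F$'s, the tail $E_{s+1},\dots,E_m$ contains no $F$-blocks and stays bare, while each of $E_1,\dots,E_s$ consumes a (possibly empty) consecutive $F$-block, yielding the first sum. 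If some $E_s$ consumes $a_0$ (Case B), then any $F$'s in $E_s$'s block must precede $a_0$ and hence form a suffix $F_{i_s+1},\dots,F_n$, and no $F$-block can lie outside $E_s$ afterward; the resulting configuration coincides with $D\{\dots, E_s\{F_{i_s+1},\dots,F_n,\lozenge\}, E_{s+1},\dots,E_m\}$ by the definition of the second-order cyclic brace, yielding the second sum.

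The main obstacle is the careful bookkeeping of Koszul signs, in particular in Case B of (i) where one must reconcile the sign conventions of the second-order cyclic brace $\lozenge$ with those inherited from nesting the outer cyclic brace on $G$ with the inner standard brace. The combinatorial bijection between configurations on the two sides is straightforward; the sign verification is mechanical but tedious, and can be organized by isolating three elementary moves (cyclic rotation of $F$'s, transposition of an $E_k$ past a consecutive $F$-block, and the nesting-conversion $\Delta \rightsquigarrow \lozenge$ when $a_0$ is absorbed into an $E_s$-block) and checking that each contributes exactly the Koszul sign predicted by the stated formulas for $t$ and $\tau$.
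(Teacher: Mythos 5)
Your proposal is correct and follows essentially the same route as the paper: unfold both braces via the $\omega$-pairing against $\one$, note that $a_0$ sits after all the $F_i$'s, and sort terms by whether $a_0$ is consumed by some $E_s$ (giving the $\Delta$- versus $\lozenge$-sums in (i)) or by the position $r$ of the $F$-rotation relative to $a_0$ (in (ii)). Like the paper, you leave the Koszul sign verification as a mechanical check, though your organization of it into three elementary moves is a slightly more explicit way of packaging what the paper simply calls "tedious but straightforward."
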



Our claim is that the BV structure, whether on open-string (for $A_\infty$ algebras) or open-closed-string (for OCHAs) Hochschild cohomology, will be derived mostly from the cyclic brace relations.
Indeed, to show the cochain-level identity (\ref{[DE]_intro}) for an OCHA $\q$, we will apply it to the three circumstances: $\q\{D\}\{E,\Delta\}$, $D\{\q\}\{E,\Delta\}$, and $D\{E,\Delta\} \{\q\}$.
Besides, there is a connection between the cyclic property of $\q$ and cyclic braces. Indeed, for any cyclic Hochschild cochain $\q$, there exists a natural interchange between first-order and second-order cyclic braces, expressed roughly as follows (see Proposition \ref{cyclic_brace_with_omega_cyclic_prop} for more details): 
\[
\q \{ D\{E_1,\dots, E_m, \Delta, E_{m+1},\dots, E_n \} \} =
\pm \ D\{E_1,\dots, E_m, \q \{\lozenge \}, E_{m+1},\dots, E_n \} 
\]
\[
\q \{D_1,\dots, D_{s-1}, D_s\{\Delta\}, D_{s+1}, \dots, D_n\} = \pm 
D_s\{ 
\q \{D_{s+1},\dots, D_n, \lozenge, D_1,\dots, D_{s-1}\} \}
\]
For example, as \(\q\) is cyclic, we can show \(D\{\q\{\lozenge\}\} = \pm \q\{D\{\Delta\}\} = \pm \q\{\Delta D\}\) to obtain $ \delta \Delta = \Delta \delta$.

\subsection*{Acknowledgment}
The author is grateful to Jim Stasheff and Ben Ward for useful email correspondence, to Jiahao Hu, Ryszard Nest, Xinxing Tang, Yi Wang for useful conversations.


\section{Preliminaries}

Define $[k] := \{1, \dots, k\}$. By
$
I_1 \sqcup \cdots \sqcup I_r = [k]
$
we mean it gives a partition of $[k]$ into \textit{ordered} subsets $I_j = \{i_1 < i_2 < \cdots\}$ for $1 \leq j \leq r$.
Besides, we introduce the "dotted partition”
\begin{equation}
	\label{dot_sqcup_notation_eq}
	I_1 \dot\sqcup \cdots \dot\sqcup I_r = [k]
\end{equation}
which means the partition $I_1 \sqcup \cdots \sqcup I_r = [k]$ further satisfies that all elements in $I_i$ are smaller than those in $I_j$ if $i < j$. For example, $\{1,2\} \dot\sqcup \{3,4,5\}=[5]$ is a dotted partition, but $\{1,4\}\sqcup \{2,3,5\}=[5]$ is not.
Define $C^k(A,A')$ to be the space of multilinear maps $A^{\otimes k}\to A'$.
Define 
\begin{equation}
	\label{hochschild_cochain_eq}
	C^\bullet (A, A')=\prod_{k\geqslant 1} C^k(A,A')
\end{equation}
An \textit{\textbf{$A_\infty$ algebra}} is an element $\m=\{\m_k : k\geqslant 1\}$ in $C^\bullet (A, A)$ such that its degree is $|\m|=|\m_k|=1$, the first term $\m_{1}=d_A$ agrees with the differential, and
\[
\sum_{\substack{ k_1+k_2=k+1 \\ k_1, k_2\geqslant 1}}  \sum_{i=1}^{k_1+1} (-1)^\ast \ \m_{k_1} (a_1,\dots, \m_{k_2} (a_i,\dots, a_{i+k_2-1}), \dots, a_k ) =0
\]
where $\ast = \sum_{j=1}^{i-1} |a_j|$.
Note that the concise sign $|\m|=1$ is different from the standard convention but can be achieved by the use of shifted degree.

A multi-linear map $f: Z^{\otimes \ell}\to Z'$ of graded vector spaces $Z,Z'$ is said to be \textit{graded symmetric} if 
$
f(z_{\sigma(1)},\dots, z_{\sigma(\ell)})=(-1)^{\epsilon(\sigma)} f(z_1,\dots,z_\ell)
$
for $z_i\in Z$, all permutations $\sigma \in S_\ell$, and the sign
$
{\epsilon(\sigma)} = \epsilon(\sigma; z_1,\dots, z_\ell) = \sum_{i<j: \sigma(i)>\sigma(j)} |z_i| \cdot |z_j|
$.
Let $I_{\ell}$ be the subspace of $Z^{\otimes \ell}$ generated by 
$
z_1\otimes \cdots \otimes z_\ell - (-1)^{\epsilon(\sigma)} z_{\sigma(1)}\otimes \cdots \otimes z_{\sigma(\ell)}
$
for $z_i\in Z$, $\sigma \in S_\ell$.
Define 
$
Z^{\wedge \ell} := Z^{\otimes \ell} / I_{\ell}
$
and call it the $\ell$-th {graded symmetric tensor power} of $Z$.
Note that $Z^{\wedge \ell}$ can be characterized by a universal property: there is a canonical graded symmetric multilinear map $\varphi: Z^{\times \ell} \to Z^{\wedge \ell}$ such that for every graded symmetric multilinear map $f: Z^{\times \ell} \to E$, there is a unique linear map $f_{\wedge}: Z^{\wedge\ell}\to E$ with $f(z_1,\dots, z_\ell)=(-1)^{\epsilon(\sigma)} f_{\wedge}(\varphi(z_1,\dots, z_\ell))$.
From now on, we write $z_1\wedge\cdots \wedge z_\ell$ for the image $\varphi(z_1,\dots, z_\ell)$.
Then, $z_1\wedge \cdots \wedge z_\ell = (-1)^{\epsilon(\sigma)}z_{\sigma(1)}\wedge \cdots \wedge z_{\sigma(\ell)}$.

We introduce 
\[
z_J=z_{j_1}\wedge \cdots \wedge z_{j_n}
\]
for an ordered subset $J=\{j_1<\cdots <j_n\}$.
Abusing the notation, we also write 
\[
a_J=a_{j_1}\otimes \cdots\otimes a_{j_n}
\] if the context is clear.
Let $(Z,d_Z)$ and $(Z', d_{Z'})$ be differential graded vector spaces.
Define $\tilde C^\ell (Z, Z')$ to be the space of graded symmetric maps $Z^{\wedge \ell}\to Z'$.
Define the Chevally-Eilenberg-type space
\[
\tilde C^\bullet (Z, Z') =\prod_{\ell\geqslant 1} \tilde C^\bullet (Z, Z')
\]
An \textit{\textbf{$L_\infty$ algebra}} is an element $\ml=\{\ml_\ell: \ell\geqslant 1\}$ in $\tilde C^\bullet(Z, Z)$ such that $|\ml|=|\ml_{\ell}|=1$, $\ml_{1}=d_Z$, and 
\[
\sum_{J_1\sqcup J_2=[\ell]}  (-1)^\epsilon \ \ml_{|J_2|+1} ( \ml_{|J_1|} (z_{J_1}) \wedge z_{J_2} ) =0
\]
with $\epsilon$ given by $z_{[\ell]}=(-1)^{\epsilon} z_{J_1}\wedge z_{J_2}$.
For two $L_\infty$ algebras $(Z,\ml)$ and $(Z',\ml')$, an \textit{$L_\infty$ homomorphism} from $(Z,\ml)$ to $(Z',\ml')$ is an element $\mk=\{\mathfrak k_{\ell}: \ell\geqslant 1\}$ in $\tilde C^\bullet (Z, Z')$ such that $|\mk|=|\mk_{\ell}|=0$ and
\begin{align*}
	&\sum_{J_1\sqcup\cdots\sqcup J_r=[\ell]}  (-1)^{\epsilon_1} \  
	\ml'_{r}
	\big(
	\mk_{|J_1|} (z_{J_1}) \wedge \cdots \wedge \mk_{|J_r|} (z_{J_r})
	\big) =
	\sum_{K_1\sqcup K_2=[\ell] }
	(-1)^{\epsilon_2} \
	\mk_{|K_2|} \big( \ml_{|K_1|} (z_{K_1})\wedge z_{K_2} \big)
\end{align*}
with $\epsilon_1, \epsilon_2$ characterized by $z_{[\ell]}=(-1)^{\epsilon_1} z_{J_1}\wedge \cdots \wedge z_{J_r}$ and $z_{[\ell]} = (-1)^{\epsilon_2} z_{K_1}\wedge z_{K_2}$.
Remark that by definition, $\mk_{1}$ gives rise to a cochain map from $(Z, d_Z=\ml_{1})$ to $(Z', d_{Z'}=\ml'_{1})$.

Fix differential graded vector spaces $(A, d_A)$, $(A', d_{A'})$, and $(Z, d_Z)$. Define $C^{\ell, k} (Z ; A, A')$ to be the space of maps $\varphi: Z^{\wedge \ell}\otimes A^{\otimes k} \to A'$.
Define the open-closed Hochschild-type space as follows
\begin{equation}
	\label{hochschild_cochain_open_closed_eq}
	C^{\bullet,\bullet}(Z; A, A')= \prod_{\substack{\ell,k \geqslant 0 \\ (\ell,k)\neq (0,0)}} 
	C^{\ell, k} (Z ; A, A')
\end{equation}

Any element $\ml=\{\ml_\ell : \ell \geqslant 1\}$ in $\tilde C^\bullet (Z,Z)$ induces a map
\[
\widehat{\ml}: C^{\bullet,\bullet}(Z; A, A') \to C^{\bullet,\bullet}(Z; A, A')
\]
as follows:
For $D\in C^{\ell,k}(Z;A,A')$, $z_{[\ell]}=z_1\wedge \cdots \wedge z_\ell\in Z^{\wedge \ell}$, $a_{[k]}=a_1\otimes \cdots \otimes a_k \in A^{\otimes k}$, we define
\begin{equation}
	\label{hat_ml_eq}
\widehat{\ml}(D)_{\ell,k} (z_{[\ell]}; a_{[k]}) = \sum (-1)^\epsilon \  D_{|J_2|+1,k} (\ml_{|J_1|} (z_{J_1})\wedge z_{J_2}; a_{[k]})
\end{equation}
where the sum is taken over all the partitions $[\ell]=J_1\sqcup J_2$ and the sign $\epsilon$ is given by $z_{[\ell]}=(-1)^\epsilon z_{J_1}\wedge z_{J_2}$.
It is straightforward to verify that if $\ml$ is an $L_\infty$ algebra, then $\widehat{\ml} \ \ \widehat{\ml}=0$.
We call $\widehat{\ml}$ the \textbf{\textit{closed string action}} induced by $\ml$.

For $D, E_1,\dots, E_m\in C^{\bullet,\bullet}(Z; A, A')$, we define a new element $D\{E_1,\dots, E_m\}$ in $C^{\bullet,\bullet}(Z; A, A')$ by the following formula: for $z_{[\ell]}=z_1\wedge \cdots \wedge z_\ell$ and $a_{[k]}=a_1\otimes \cdots \otimes a_k$, we have
\begin{equation}
	\label{brace_oc_eq}
	\begin{aligned}
	\ \  D\{E_1,\dots, E_m\} (z_{[\ell]}; a_{[k]})  =
	\sum (-1)^\ast D \big( z_J; a_{I_0}, E_1 (z_{L_1}; a_{K_1}), a_{I_1},\dots, a_{I_{m-1}}, E_m(z_{L_m}; a_{K_m}) , a_{I_m}  \big) 
\end{aligned}
\end{equation}
where the summation is taken over the (dotted) partitions
$[\ell] = J\sqcup L_1 \sqcup \cdots \sqcup L_m$ and $[k]=I_0\dot\sqcup K_1 \dot \sqcup I_1 \dot\sqcup \cdots \dot\sqcup I_{m-1}\dot\sqcup K_m \dot\sqcup I_m$ (\ref{dot_sqcup_notation_eq}); where the sign is
\begin{align*}
	\ast 
	&=
	\sum_{j=1}^m \big(\sum_{i=0}^{j-1} |a_{I_i}|  +  \sum_{i=1}^{j-1} |a_{K_i}| \big) \big(|E_j|+|z_{L_j}| \big)  +
	\sum_{j=1}^m  \big( |z_J|+\sum_{i=1}^{j-1}|z_{L_i}|\big) |E_j| 
	\ \   +\epsilon
\end{align*}
with $\epsilon$ determined by
$
z_{[\ell]}= (-1)^\epsilon \ z_J\wedge z_{L_1}\wedge \cdots \wedge z_{L_m} 
$.
We call it the \textbf{\textit{(open-closed) brace operation}}.
The case $m=0$ is allowed, and then we set
$D\{\}=D$.
In our sign convention, we have
\begin{equation}
	\label{brace_sign_eq}
	|D\{E_1,\dots, E_m\}| = |D| + |E_1|+\cdots +|E_m|
\end{equation}

We can establish an almost identical brace relation for the open-closed braces as above. Moreover, the closed-string actions of elements in $\tilde{C}^\bullet(Z,Z)$ on $C^{\bullet,\bullet}(Z; A, A')$ interact with the open-closed brace operations $D\{E_1,\dots, E_m\}$ in the following way:

\begin{lem}
	\label{brace_brace_lem}
	We have the following equation
	\begin{align*}
		&D\{E_1,\dots, E_m\} \{F_1,\dots, F_n\} \\
		&=
		\sum_{i_1\leqslant j_1\leqslant \cdots \leqslant i_m\leqslant j_m} (-1)^\ast \ D \Big\{F_1,\dots, F_{i_1}, E_1\{F_{i_1+1}, \dots, F_{j_1}\}, F_{j_1+1},\dots, F_{i_2}, E_2\{F_{i_2+1},\dots, F_{j_2}\}, \dots, \\
		& \qquad \qquad \qquad  , F_{i_m}, E_m\{ F_{i_m+1}, \dots, F_{j_m}\} , F_{j_m+1},\dots, F_n \Big \}
	\end{align*}
	where the sign is
	$
	\ast = \sum_{k=1}^m |E_k| \sum_{a=1}^{i_k} |F_a|
	$.
Moreover, we have
	\begin{align*}
		& \widehat{\ml}\big(D\{E_1, \dots, E_m \} \big)  \\
		&
		=
		(-1)^{|\ml|\sum_{i=1}^m|E_i|} \ \widehat{\ml}(D) \{E_1,\dots, E_m\} 
		+
		\sum_{i=1}^m (-1)^{|\ml|\sum_{j=i+1}^m|E_j|}D\{E_1,\dots, E_{i-1}, \widehat{\ml}(E_i), E_{i+1},\dots, E_m\}
	\end{align*}
\end{lem}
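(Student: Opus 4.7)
The plan is to prove both claims by direct combinatorial expansion from the definitions \eqref{brace_oc_eq} and \eqref{hat_ml_eq}, tracking partitions of the closed inputs $z_1\wedge\cdots\wedge z_\ell$ and the open inputs $a_1\otimes\cdots\otimes a_k$ in parallel. The argument is structurally analogous to the classical (open-string) brace relation (cf.\ \cite[Proposition 2.3.2]{tamarkin2005ring}), but it must handle the graded-symmetric closed inputs alongside the ordered open inputs, so the main work will be sign bookkeeping under the Koszul convention.

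For part (i), write $G := D\{E_1,\dots,E_m\}$ and expand $G\{F_1,\dots,F_n\}$ by applying \eqref{brace_oc_eq} twice. Each slot of $G$ is visibly either a direct input of $D$ or an input of some $E_i$, so when the $F_j$'s are substituted in order each $F_j$ either becomes a direct input of $D$ between consecutive $E_i$ and $E_{i+1}$ or is absorbed into a single $E_i$. Enumerating the $F$-indices absorbed into $E_r$ as the interval $[i_r+1,j_r]$ recovers the monotone sequence $i_1\leqslant j_1\leqslant\cdots\leqslant i_m\leqslant j_m$ in the statement. The same assignment governs the redistribution of the closed inputs carried by each $F_j$ between $D$ and the surrounding $E_i$. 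Modulo sign, the identity is then a bijection between two parametrizations of the same triple-nested partition.

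For part (ii), apply \eqref{hat_ml_eq} to $D\{E_1,\dots,E_m\}$ and then expand the open-closed brace. The key observation is that the single closed input $\ml(z_{J_1})$ occupies exactly one slot of the expanded brace, and this slot is either a closed-input slot of $D$ or a closed-input slot of exactly one $E_i$. In the first case, summing over partitions $[\ell]=J_1\sqcup J_2$ collects the contribution into $\widehat{\ml}(D)\{E_1,\dots,E_m\}$; in the second case, the sum over $J_1$ is absorbed into the inner partition defining $\widehat{\ml}(E_i)$, yielding $D\{E_1,\dots,\widehat{\ml}(E_i),\dots,E_m\}$. Both alternatives are disjoint and together exhaust the expansion.

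The main obstacle is sign bookkeeping. For (i), I will combine the open-side Koszul signs from moving each $a_{I_p}$ past the $E_j$'s and $F_k$'s across the two applications of \eqref{brace_oc_eq} with the closed-side Koszul signs arising from the refined partition $z_{[\ell]} = \pm z_J\wedge z_{L_1}\wedge\cdots\wedge z_{L_m}$; I expect most closed-side contributions to cancel because graded-symmetric signs depend only on the underlying index partition, ultimately collapsing to the stated $\sum_k |E_k|\sum_{a\leqslant i_k}|F_a|$. For (ii), the factor $(-1)^{|\ml|\sum_{j>i}|E_j|}$ will come from commuting the degree-$|\ml|$ operation past the $E_j$ with $j>i$ through the sign formula \eqref{brace_sign_eq}; the most delicate check will be compatibility between the partition sign $\epsilon$ in \eqref{hat_ml_eq} and the sign $\ast$ in \eqref{brace_oc_eq} when the closed index contributed by $\ml(z_{J_1})$ is reassigned across the partition.
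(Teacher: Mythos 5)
Your plan --- expanding twice from the definitions (\ref{brace_oc_eq}) and (\ref{hat_ml_eq}), classifying whether each $F_j$ lands as a direct input of $D$ or is absorbed into a single $E_i$ (forcing the interval structure $i_1\leqslant j_1\leqslant\cdots\leqslant i_m\leqslant j_m$), and in the second identity whether the factor $\ml(z_{J_1})$ is wedged into the closed inputs of $D$ or of exactly one $E_i$ --- is precisely the standard direct argument; the paper itself gives no details here and simply defers to \cite{Yuan_ocha_hochschild}, where this expansion is carried out, so your route matches the intended proof. Your Koszul sign analysis is only sketched rather than verified, but the sources you identify for the signs (the two applications of the brace sign, the closed-input partition sign $\epsilon$, and commuting the degree-$|\ml|$ operation past $E_{i+1},\dots,E_m$ via (\ref{brace_sign_eq})) are the correct ones, so the proposal is sound.
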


\begin{proof}
	See \cite{Yuan_ocha_hochschild} for the proof.
\end{proof}

\section{Cyclic brace operations}

From now on, we fix a specific element \(\one \in A\) with \(|\one| = -1\). This will later serve as the unit of an open-closed homotopy algebra, but for now, it is simply a chosen element of $A$.
Let $\Bbbk$ be the ground coefficient ring. 

\begin{defn}
	\label{cyclic_defn}
	A non-degenerate
	bilinear map $\omega: A\otimes A\to \Bbbk$ is called a \textbf{constant symplectic structure} (cf. \cite[Definition 14]{ocha_kajiura_cmp}) if it satisfies
	\begin{itemize}
		\itemsep 0pt
		\item $\omega( a_1, a_2)= (-1)^{|a_1||a_2|+1} \omega (a_2, a_1)$
		\item there is an integer $|\omega|\in\mathbb Z$ such that $\omega(a_1,a_2)=0$ except for $|a_1|+|a_2|+|\omega|=0$.
	\end{itemize}
	Given a constant symplectic structure $\omega$, we call $D\in C^{\bullet,\bullet}(Z; A, A)$ \textbf{$\omega$-cyclic} (or just \textbf{cyclic}) if
	\[
	\omega (  D(z_1,\dots, z_\ell, a_1,\dots, a_k) , a_0 ) =(-1)^p \ \omega (D(z_1,\dots, z_\ell, a_0,a_1,\dots, a_{k-1}), a_k)
	\]
	where $p=|a_0| \sum_{i=1}^k|a_i|$.
	The subspace of $\omega$-cyclic elements is denoted by $C^{\bullet,\bullet}_\omega(Z; A, A)$.
\end{defn}

\subsection{BV operator and first-order cyclic brace}
The non-degeneracy of $\omega$ can be used to define the following operator (cf. e.g. \cite[p.2352]{tradler2008batalin})
\begin{equation}
	\label{Delta_eq}
	\Delta : C^{\bullet,\bullet+1}(Z; A, A) \to C^{\bullet,\bullet}(Z; A, A)
\end{equation}
by requiring $\Delta D$ satisfies
\[
\omega\Big( \Delta D(z_1,\dots, z_\ell; a_1,\dots, a_k), a_0\Big) = \sum_{i=0}^k (-1)^{|a_{[i]}|(|a_{[i+1,k]}|+|a_0|) } \ \omega \Big(  D(z_1,\dots, z_\ell; a_{i+1},\dots, a_k, a_0, a_1,\dots, a_i) , \one \Big)
\]
for any $z_1,\dots, z_\ell\in Z$ and $a_0,a_1,\dots, a_k\in A$.
By definition, the above expression is zero except for
\[
|\omega|+ |\Delta D| + |z_{[\ell]}|+ |a_{[0,k]}| = |\omega| + |D| + |z_{[\ell]}|+ |a_{[0,k]}|  +|\one|=0
\]
In particular, as $|\one|=-1$, we have
$
|\Delta D| =|D|-1
$.
We call the above map $D\mapsto \Delta D$ the \textit{BV operator}.

We introduce the following generalization, called \textit{cyclic braces}, of the BV operator $\Delta$ that combines the brace operations in (\ref{brace_oc_eq}).
Remark that the cyclic brace operations are not just defined on the subspace 
$C^{\bullet,\bullet}_\omega(Z; A, A)$ of $\omega$-cyclic elements, but can in fact be constructed directly on the full $C^{\bullet,\bullet}(Z; A, A)$ as long as we have a non-degenerate bilinear form $\omega$.

\begin{defn}
	[First-order Cyclic Brace]
	\label{cyclic_brace_defn}
	Given $D, E_1,\dots, E_m \in C^{\bullet,\bullet}(Z; A, A)$, we define a new element \[
	D \{ E_1,\dots, E_m, \Delta \}
	\]
	in $C^{\bullet,\bullet}(Z; A, A)$ through the non-degeneracy of $\omega$ as follows:
\begin{align*}
&\omega\Big(D \{ E_1,\dots, E_m,\Delta \} (z_{[\ell]}; a_{[k]}) , a_0 \Big) \\
&=
\sum (-1)^\ast
\omega
\Big( D(z_{L_0}; a_{r+1},\dots, a_{j_1}, E_1(z_{L_1}; a_{j_1+1}, \dots, a_{i_1}), \dots, E_m(z_{L_m}; a_{j_m+1},\dots, a_{i_m}), \dots, a_k, a_0, a_1,\dots, a_{r} ) \ , \ \one \Big)
\end{align*}
where the summation is taken over all the partitions $[\ell]=L_0\sqcup L_1\sqcup \cdots \sqcup L_m$ and
\[
0\leqslant r\leqslant j_1 \leqslant i_1 \leqslant j_2\leqslant i_2\leqslant \cdots\leqslant j_m\leqslant i_m \leqslant k
\]
and where the sign is given by
\[
\ast = \sum_{p=1}^m |a_{[r+1,j_p]}|\big(|E_p|+|z_{L_p}|\big) + \sum_{p=1}^m \sum_{q=0}^{p-1}|z_{L_q}||E_p| +\epsilon + |a_{[1,r]}| \big(|a_{[r+1,k]}|+|a_0|\big)
\]
with $\epsilon$ decided by
\[
z_{[\ell]}=(-1)^\epsilon \ z_{L_0} \wedge z_{L_1}\wedge \cdots \wedge z_{L_m}
\]
In a similar manner, one can also formulate the definition of 
\[
D\{E_1,\dots, E_s, \Delta, E_{s+1},\dots, E_n\}
\]
\end{defn}

\begin{rmk}
	Here we allow $m=0$ and observe $\Delta D= D\{\Delta\}$ for the BV-operator (\ref{Delta_eq}).
Intuitively, the symbol $\Delta$ anchors the position of $a_0$ in the formula. 
Moreover, by definition, one can check that
\[
|D\{E_1,\dots, E_n,\Delta\} = |D\{E_1,\dots, E_s, \Delta, E_{s+1},\dots, E_n\}|= |D| +|E_1|+\cdots+|E_n|-1
\]
\end{rmk}

\begin{rmk}
	\label{ambiguity_rmk}
	Now, we have two types of braces from (\ref{brace_oc_eq}) and Definition \ref{cyclic_brace_defn}. These braces allow us to construct various expressions, such as
\[
D\{E_1, E_2\{ F, \Delta\} , E_3\} \quad  , \quad  D\{ E_1, \Delta, E_2\{F\}\}
\]
We warn that the first expression is \textit{not} the same as the element $G\in C^{\bullet,\bullet}(Z; A, A)$ defined by
\[
\omega\Big( G(\dots ), a_0 \Big) = \sum \pm \omega \Big( D \big(\dots E_1 \dots E_2 (\dots F\dots a_0 \dots ) \dots E_3 \dots \big)
 \ , \ \one \Big)
\]
Specifically, we need to first take $E_2'=E_2\{F, \Delta\}$ as above and next use the usual brace (\ref{brace_oc_eq}) to define $D\{E_1,E_2', E_3\}$.
Nevertheless, it will be useful to introduce a notation for such expression, for which we are going to give the details below.
\end{rmk}

\subsection{Second-order cyclic braces}
To investigate a cyclic version of the brace relation, we want to expand the iterated brace operations $D\{E_1,\dots, E_m\} \{F_1,\dots, F_n,\Delta\}$ and $D\{E_1,\dots, E_m,\Delta \} \{F_1,\dots, F_n\}$.
However, the first-order cyclic brace (Definition \ref{cyclic_brace_defn}) proves insufficient for this purpose. Therefore, we introduce the following variant, which we refer to as the second-order cyclic brace for distinction.
 
\begin{defn} [Second-order Cyclic Brace]
\label{cyclic_brace_diamond_defn}
Given $D, E_1,\dots, E_m,F_1,\dots, F_n\in C^{\bullet, \bullet}(Z; A, A)$ and fixed $1\leqslant i\leqslant m$, $0\leqslant j\leqslant n$, we define
\begin{equation}
	\label{notation_double_brace}
	D\{E_1,\dots, E_i\{F_1,\dots, F_j, \lozenge  , F_{j+1},\dots, F_n\},\dots, E_m\}
\end{equation}
to be an element in $C^{\bullet,\bullet}(Z; A, A)$, denoted by $G$ for clarity, such that $\omega\Big(
G(z_{[\ell]}; a_{[k]}) \ , \ a_0 \Big)$ is equal to
\[
\sum \pm \omega \Big(
D \Big(\dots E_1() \dots E_{i-1}(), \dots, E_i \big(\dots F_1(),\dots, F_j()\dots a_0 \dots F_{j+1}()\dots F_n() \dots \big) \dots E_m() \dots \Big) \ , \ \one 
\Big)
\]
or more precisely
\begin{align*}
&
\sum (-1)^\dagger \
\omega\Big( D \big(z_P; a_{J_0}, E_1(z_{P_1}; a_{L_1}), a_{J_1},\dots, a_{J_{i-1}}, \\
&
\qquad\qquad  E_i\big(z_{Q}; a_{ I_{0}}, F_1(z_{ Q_{1}}; a_{ K_{1}}) , a_{ I_{1}}, \dots, a_{ I_{j-1}}, F_j(z_{ Q_j}; a_{ K_j}) , a_{ I''_{j}}, \\
& 
\qquad\qquad  a_0, a_{ I'_j}, F_{j+1}(z_{ Q_{j+1}}; a_{ K_{j+1}}), \dots , F_n(z_{Q_n}; a_{K_n}), a_{I_n} \big) \\
&
\qquad\qquad  a_{J_{i+1}}, \dots, a_{J_{m-1}}, E_m (z_{P_m}; a_{L_m}), a_{J_m}
\big) , \one
\Big)
\end{align*}
where the summation is taken over (cf. (\ref{dot_sqcup_notation_eq}))
\begin{align*}
[\ell] &=
P\sqcup P_1\sqcup\cdots \sqcup P_{i-1}\sqcup \Big( Q \sqcup Q_1\sqcup \cdots Q_n \Big) \sqcup P_{i+1}\sqcup \cdots \sqcup P_m   \\
[r] &= \Big( I_j'' \dot\sqcup K_{j+1} \dot\sqcup I_{j+1} \dot\sqcup \cdots \dot\sqcup K_n \dot\sqcup I_n \Big) \dot\sqcup J_{i+1} \dot\sqcup \cdots \dot\sqcup J_{m-1} \dot\sqcup L_m \dot\sqcup J_m
  \\
[r+1,k] &=
J_0\dot\sqcup L_1 \dot\sqcup J_1 \dot\sqcup \cdots  \dot\sqcup  L_{i-1}\dot\sqcup J_{i-1}  \dot\sqcup \Big(I_0\dot \sqcup K_1 \dot\sqcup I_1 \dot\sqcup \cdots \dot\sqcup K_j \dot\sqcup  I_j' \Big)
\end{align*}
for some $1\leqslant r\leqslant k$ and where the $\dagger$ is Koszul's sign. Specifically, if we denote the biggest number in $I_s$ (resp. $J_s$) by $i_s$ (resp. $j_s$), then this sign is
\begin{align*}
	\dagger &=
	\sum_{s=1}^m |a_{[r+1,j_s]}|(|E_s|+|z_{P_s}|) + \sum_{t=1}^n |a_{[r+1,i_t]}| (|F_t|+|z_{Q_t}|) \\ 
	&+ 
	\sum_{\mu=1}^i |E_\mu| (|z_P|+\sum_{\nu=1}^{\mu-1} |z_{P_\nu}|) + \sum_{\mu=i+1}^m |E_\mu| (|z_P|+\sum_{\nu=1}^{\mu-1}|z_{P_\nu}|+|z_Q|+\sum_{\lambda=1}^n |z_{Q_\lambda}|)  \\
	&+\sum_{u=1}^n |F_u| (|z_P|+\sum_{\nu=1}^{i-1}|z_{P_\nu}|+|z_Q|+\sum_{\lambda=1}^{u-1}|z_{Q_\lambda}|)
	\\
	&+
	\epsilon+ |a_{[r]}| (|a_{[r+1,k]}|+|a_0|)
\end{align*}
with $\epsilon$ determined by 
\[
z_{[\ell]}=(-1)^\epsilon \ z_P\wedge z_{P_1}\wedge \cdots z_{P_{i-1}}\wedge \Big(z_Q\wedge z_{Q_1}\wedge \cdots \wedge z_{Q_n} \Big) \wedge z_{P_{i+1}} \wedge \cdots \wedge z_{P_m}
\]
\end{defn}

Using the notation in Definition \ref{cyclic_brace_diamond_defn}, the discussion in Remark \ref{ambiguity_rmk} suggests that $D\{E_1, E_2\{F,\Delta\}, E_3\}$ differs from $D\{E_1,E_2\{F,\lozenge\}, E_3\}$.
This distinction also explains why we need two different types of notations.
In contrast, the second expression $D\{E_1,\Delta, E_2\{F\}\}$ in Remark \ref{ambiguity_rmk} meets no ambiguity.
The observation is that ambiguity arises when the symbol $\bullet=\Delta$ or $\lozenge$ is nested within at least two braces, as in expressions of the form $\dots \{ \dots \{\dots \bullet \dots\}\dots\}\dots$, but not arises when the symbol appears within a single brace such as an expression of the form $\dots \{\dots \bullet \dots \{\dots\} \dots\}\dots$.
This observation is also our motivation to name the first/second order cyclic braces in Definition \ref{cyclic_brace_defn} and \ref{cyclic_brace_diamond_defn}

\subsection{Cyclic brace relations}
\label{s_proof_cyclic_brace_relation}
Now, it is natural to ask if the usual brace relation in Lemma \ref{brace_brace_lem} has analogous properties in the setting of cyclic brace $D\{E_1,\dots, E_m,\Delta\}$ introduced in Definition \ref{cyclic_brace_defn}.
In this section, let's prove the cyclic brace relations in Theorem \ref{cyclic_brace_relation_thm} below:

\begin{proof}
	[Proof of Theorem \ref{cyclic_brace_relation_thm}]
Let's first address (i).
By definition, we have
\begin{align*}
& \omega \Big(
D\{E_1,\dots, E_m\} \{F_1,\dots, F_n, \Delta\} (z_{[\ell]}; a_{[k]}) \ , \ a_0 \Big) = 
\sum \pm \ \omega \Big( D\{E_1,\dots, E_m\} (\dots F_1 \dots F_n \dots a_0\dots ) \ , \ \one
\Big)
\end{align*}
Note that $a_0$ must be on the right side of all $F_i$'s.
Based on (\ref{brace_oc_eq}), further expanding the above expression may have two cases as follows.
The first case is that the inputs of all $E_i$'s do not involve $a_0$. Then, a term in this case has the expression
	\[
	D\big( \dots  F_{i_1} \dots E_1 (\dots F_{i_1+1} \dots F_{j_1} \dots) \dots E_s(\dots F_{i_s+1} \dots F_{j_s} \dots) \dots F_n \dots a_0 \dots E_{s+1} \dots E_m \dots \big)
	\]
for some $1\leqslant s\leqslant m$. Putting all these types of terms together yields the first sum in (i).
The second case is that one of $E_i$'s has input $a_0$, and such a term is given by
	\[
D\big( \dots  F_{i_1} \dots E_1 (\dots F_{i_1+1} \dots F_{j_1} \dots) \dots E_{s}(\dots F_{i_{s}+1} \dots F_n \dots a_0 \dots )  \dots E_{s+1} \dots E_m \dots \big)
	\]
These terms will form the second sum in (i).

Next, let's consider (ii). We first compute
\begin{align*}
	\omega\Big(
	&D\{E_1,\dots, E_m, \Delta\} \{F_1,\dots, F_n\} (z_{[\ell]}; a_{[k]}), a_0
	\Big) \\
	&=
	\sum \ \pm \omega\Big(
	D\{E_1,\dots, E_m, \Delta\} (a_1, \dots, F_1, \dots, F_n,\dots, a_n) \ , \ a_0
	\Big)
\end{align*}
For clarity, let's temporarily omit $F_i$'s. Then, when we expand it as Definition \ref{cyclic_brace_defn}, we obtain terms in the form $D\big( \dots E_1 \dots E_m \dots a_0\dots \big)$. In particular, $a_0$ cannot be an input of any $E_i$.
Concerning $F_i$'s, we note that the cyclic order of $F_1, \dots, F_n, a_0$ must be preserved.
If we assume $F_1,\dots, F_r$ are on the right hand side of $a_0$, then on the left hand side of $a_0$, the computations are almost the same as Lemma \ref{brace_brace_lem} while only $F_{r+1},\dots, F_n$ are involved.
These discussions justify (ii).
The signs are Koszul's signs; they are tedious but straightforward to check.
\end{proof}

\subsection{Cyclic brace with cyclic Hochschild cochains}
\label{s_cyclic_brac_with_cyclic_cochain}

Note that the BV structure in Theorem \ref{main_thm_intro} requires the cyclic symmetric condition for the prescribed OCHA.
This suggests the cyclic symmetric condition should be relevant to the cyclic brace operations. Specifically, let's prove:

\begin{prop}
	\label{cyclic_brace_with_omega_cyclic_prop}
	Suppose $F$ is an $\omega$-cyclic element in $C^{\bullet,\bullet}(Z; A, A)$ in the sense of Definition \ref{cyclic_defn}.
	Then, we can relate the first-order and second-order cyclic braces as follows:
	\[
	F \{ D\{E_1,\dots, E_m, \Delta, E_{m+1},\dots, E_n \} \} =
	(-1)^{e_1} \ D\{E_1,\dots, E_m, F \{\lozenge \}, E_{m+1},\dots, E_n \} 
	\]
	\[
	F\{D_1,\dots, D_{s-1}, D_s\{\Delta\}, D_{s+1}, \dots, D_n\} = (-1)^{e_2}\
	D_s\{ 
	F\{D_{s+1},\dots, D_n, \lozenge, D_1,\dots, D_{s-1}\} \}
	\]
	where the signs are
	\begin{align*}
		e_1&= 1+|F| \left(|D|+\sum_{p=1}^m|E_p|-1\right) \\
		e_2&=1+(|D_s|-1) \left(|F|+\sum_{a=1}^{s-1}|D_a|\right) +\sum_{a<s}|D_a|\sum_{a>s}|D_a|
	\end{align*}
\end{prop}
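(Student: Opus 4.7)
The plan is to prove both identities by a uniform three-step strategy: unfold the LHS using the definition of the usual open-closed brace \eqref{brace_oc_eq}, apply the cyclic property of $F$ (Definition \ref{cyclic_defn}) to rotate the pairing slot $a_0$ into an internal argument of $F$ while moving the inner cyclic-brace expression into the outer $\omega$-slot, and then recognize the resulting pairing against $\one$ as the defining formula of a second-order cyclic brace (Definition \ref{cyclic_brace_diamond_defn}). Non-degeneracy of $\omega$ will upgrade the pairing identity to the asserted cochain identity.

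For the first identity, I would set $G := D\{E_1,\ldots, E_m, \Delta, E_{m+1},\ldots, E_n\}$ (of total degree $|G| = |D| + \sum_p |E_p| - 1$) and unpack $\omega\bigl(F\{G\}(z_{[\ell]}; a_{[k]}), a_0\bigr)$ via \eqref{brace_oc_eq} into a signed sum of terms of the form $\omega\bigl(F(z_{J_0}; a_{I_0}, G(z_{J_1}; a_K), a_{I_1}), a_0\bigr)$. Repeated application of the cyclic property of $F$ then rotates $G(z_{J_1}; a_K)$ out of $F$'s argument list into the outer $\omega$-slot, while $a_0$ migrates into $F$'s argument list in a definite position. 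A single skew-symmetry swap of $\omega$, followed by the defining formula for the first-order cyclic brace $G$ (Definition \ref{cyclic_brace_defn}) — now applied with the $F$-term playing the role of the auxiliary vector in place of a single element of $A$ — converts each summand into $\pm\omega\bigl(D(\ldots E_1 \ldots E_m\, F(\ldots, a_0, \ldots)\, E_{m+1} \ldots E_n \ldots), \one\bigr)$, which is exactly the pairing defining $D\{E_1,\ldots, E_m, F\{\lozenge\}, E_{m+1},\ldots, E_n\}$. The second identity proceeds identically with $G_s := D_s\{\Delta\} = \Delta D_s$ (of degree $|D_s|-1$) in place of $G$: after the same cyclic rotation within $F$, the second step invokes the definition of $\Delta$ in \eqref{Delta_eq} rather than Definition \ref{cyclic_brace_defn}, yielding $\pm\omega\bigl(D_s(\ldots F(\ldots D_{s+1}\ldots D_n \ldots a_0 \ldots D_1 \ldots D_{s-1} \ldots) \ldots), \one\bigr)$, which matches the pairing of $D_s\{F\{D_{s+1},\ldots, D_n, \lozenge, D_1,\ldots, D_{s-1}\}\}$ with $a_0$ via Definition \ref{cyclic_brace_diamond_defn}.

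The main obstacle will be sign bookkeeping. Each cyclic rotation of $F$'s arguments contributes a Koszul sign depending on the degrees of the inputs being cycled past, and the Koszul signs from partitioning $z_{[\ell]}$ and from interleaving the $E_p$'s, $F_u$'s, and $D_a$'s among the $a_i$'s accumulate throughout. The key observation I expect to exploit is that the $a$-dependent and $z$-partition signs produced on the LHS match, term-by-term through the dotted partitions of Definitions \ref{cyclic_brace_defn} and \ref{cyclic_brace_diamond_defn}, the partition signs appearing on the RHS; after this cancellation the only surviving contribution is the $(-1)^{|F||G|+1}$ from the single skew-symmetry application of $\omega$, which yields $e_1 = 1 + |F|(|D|+\sum_p|E_p|-1)$. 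For identity (ii) one additionally incurs the Koszul sign from cyclically reordering $\{D_1,\ldots, D_{s-1}, D_{s+1}, \ldots, D_n\}$ into $\{D_{s+1},\ldots, D_n, D_1, \ldots, D_{s-1}\}$, which accounts for the extra term $\sum_{a<s}|D_a|\sum_{a>s}|D_a|$ in $e_2$. A careful case-by-case verification indexed by the same dotted partitions that appear on both sides will then complete the proof.
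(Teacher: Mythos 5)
Your proposal follows essentially the same route as the paper's proof: expand the left-hand side with the ordinary open-closed brace, rotate using the $\omega$-cyclicity of $F$ so that the inner cyclic-brace term moves into the pairing slot while $a_0$ enters $F$'s arguments, swap the two slots by skew-symmetry of $\omega$, expand the first-order cyclic brace (resp.\ the BV operator) with the $F$-output playing the role of the anchored element, recognize the result as the defining expansion of the second-order cyclic brace from Definition \ref{cyclic_brace_diamond_defn}, and conclude by non-degeneracy of $\omega$. The only differences are presentational (the paper pre-expands the right-hand side and compares the accumulated signs $\kappa_0$ vs.\ $\kappa_4$, resp.\ $\xi_5$), and, like the paper, you assert rather than carry out the Koszul-sign cancellation.
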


\begin{proof}
	Let's first prove the first relation.
For simplicity, we focus on the case 
$m=n$; the computation for the general case follows a quite similar process and is left to the interested reader.
Expanding $D\{E_1,\dots, E_m, F\{\lozenge\}\}$ as in Definition \ref{cyclic_brace_diamond_defn} yields the following expression
\[
\sum
(-1)^{\kappa_0} \ \omega\Big( D(z_{K_0}; a_{r+1}, \dots a_{j_1}, E_1(z_{K_1}; \ ) \dots, a_{j_m}, E_m(z_{K_m}; \ ) \dots F(z_J; a_{j+1}, \dots a_0,\dots, a_i) \dots, a_r) \ , \ \one \Big)
\]
where $[\ell]=K_0\sqcup K_1\sqcup\cdots\sqcup K_m\sqcup J$ and the sign is
\begin{align*}
	\kappa_0=\sum_{p=1}^m |a_{[r+1,j_p]}| (|E_p|+|z_{K_p}|) +
	\sum_{p=1}^m |E_p| \sum_{q=0}^{p-1}|z_{K_q}| + |F||a_{[r+1,j]}|+ |F| (|z_{K_0}|+\sum_{p=1}^m|z_{K_p}|)\\
	+
	\epsilon_0+ |a_{[1,r]}|(|a_{[r+1,k]}|+|a_0|)
\end{align*}
with $\epsilon_0$ given by $z_{[\ell]}=(-1)^{\epsilon_0} \ z_{K_0}\wedge z_{K_1}\wedge \cdots \wedge z_{K_m}\wedge z_J$.
Sometimes we may also denote it by
\[
\epsilon_0=:\epsilon(z_{[\ell]}\mapsto z_{K_0}\wedge z_{K_1}\wedge \cdots z_{K_m}\wedge z_J)
\]

For clarity, let's set $G=D\{E_1,\dots, E_m,\Delta\}$ temporarily. Using the fact that $F$ is $\omega$-cyclic and the skew-symmetry of $\omega$, we obtain
\begin{align*}
\omega \Big(
F\{G\} (z_{[\ell]}; a_{[k]}) \ , \ a_0 \Big) 
&=
\sum (-1)^{\kappa_1} \ \omega\Big(F(z_{J}; a_1,\dots, G(z_{K}; a_{i+1},\dots, a_j),\dots, a_k) \ , \ a_0 \Big) \\
&=
\sum (-1)^{\kappa_2} \ \omega\Big(
F(z_{J}; a_{j+1},\dots, a_k, a_0, a_1,\dots, a_i) \ , \ G(z_{K}; a_{i+1},\dots, a_j) \Big) \\
&=
\sum (-1)^{\kappa_3} \ \omega\Big(G(z_{K}; a_{i+1},\dots, a_j) \ , \ F(z_{J}; a_{j+1},\dots, a_k, a_0, a_1,\dots, a_i) \Big)
\end{align*}
where $J\sqcup K=[\ell]$ and the last sign is
\[
\kappa_3= |a_{[1,i]}|(|a_{[i+1,k]}|+|a_0|) + (|F|+|z_{J}|) (|G|+|z_{K}|+|a_{[i+1,j]}|) + |G||z_J|+ \epsilon_1+1
\]
with $\epsilon_1$ given by $z_{[\ell]}=(-1)^{\epsilon_1} z_{J}\wedge z_{K}$.
By Definition \ref{cyclic_brace_defn}, expanding $G=D\{E_1,\dots, E_m,\Delta\}$ in the above equation yields the same expression but with a potentially different sign
\[
\sum
(-1)^{\kappa_4} \ \omega\Big( D(z_{K_0}; a_{r+1}, \dots a_{j_1}, E_1(z_{K_1}; \ ) \dots, a_{j_m}, E_m(z_{K_m}; \ ) \dots F(z_J; a_{j+1}, \dots a_0,\dots, a_i) \dots, a_r) \ , \ \one \Big)
\]
where $K_0\sqcup K_1\sqcup\cdots \sqcup K_m=K$.
By a direct computation, one can find $\kappa_4=\kappa_0+1+ |F| |D\{E_1,\dots, E_m\}|$.

Next, we prove the second relation. 
We compute 
\begin{align*}
&\hspace{-2em} \omega\Big(F\{D_1,\dots, D_{s-1}, D_s\{\Delta\}, D_{s+1},\dots, D_n\} (z_{[\ell]}; a_{[k]}) \ , \ a_0 \Big) \\
&\hspace{-4.5em}=
\sum (-1)^{\xi_1} \
\omega\Big(F(z_J; a_1,\dots, D_1(z_{K_1}; a_{j_1+1},\dots, a_{i_1}) ,\dots,  D_s\{\Delta\}(z_{K_s}; a_{j_s+1},\dots, a_{i_s} ), \dots,  D_n(z_{K_n}; a_{j_n+1}, \dots, a_{i_n}) , \dots, a_k ) \ , \ a_0\Big) \\
&\hspace{-4.5em}=
\sum (-1)^{\xi_2} \
\omega\Big(F(z_J; a_{i_s+1},\dots, a_{j_{s+1}}, D_{s+1}(z_{K_{s+1}}; \ ) ,\dots, D_n(z_{K_n}; \ ), \dots, a_0,\dots, D_1(z_{K_1}; \ ), \dots, D_{s-1}(z_{K_{s-1}}; \ ), \dots) \ , \  D_s\{\Delta\} (z_{K_s}; \ )  \Big) \\
&\hspace{-4.5em}=
\sum (-1)^{\xi_3} \ \omega\Big(
D_s\{\Delta\} (z_{K_s}; a_{j_s+1}, \dots  ) \ , \
F(z_J;  \dots,  D_{s+1}(z_{K_{s+1}}; \ ) ,\dots, D_n(z_{K_n}; \ ), \dots, a_0,\dots, D_1(z_{K_1}; \ ), \dots, D_{s-1}(z_{K_{s-1}}; \ ), \dots, a_{j_s})
\Big) \\
&\hspace{-4.5em}=
\sum (-1)^{\xi_4} \ \omega\Big(
D_s (z_{K_s}; a_{r+1},\dots, F(z_J; a_{i_s+1}  \dots,  D_{s+1}( \ ) ,\dots, D_n( \ ), \dots, a_0,\dots, D_1(  \ ), \dots, D_{s-1}( \ ), \dots, a_{j_s}), \dots a_r) \ , \ \one
\Big) \\
&\hspace{-4.5em}=
\sum (-1)^{\xi_5} \ \omega\Big(
D_s\{F\{D_{s+1},\dots, D_n ,\lozenge, D_1,\dots, D_{s-1}\}\} (\dots) \ , \ a_0 \Big)
\end{align*}
Here the first equation just uses the usual open-closed brace in (\ref{brace_oc_eq}); the second equation follows from the cyclic property of $F$; the third equation uses the skew-symmetry of $\omega$; the fourth equation comes from the definition of $D_s\{\Delta\}$; the fifth equation is Definition \ref{cyclic_brace_diamond_defn}.
The primary challenge lies in computing the signs; we apologize for not listing all the details, but through a meticulous and direct calculation, we should be able to ultimately arrive at:
\begin{align*}
\xi_5&=
\sum_{a=1}^n (|D_a|+|z_{K_a}|) |a_{[1,j_a]}| - |a_{[1,j_s]}| + \sum_{a=1}^n|D_a| (|z_J|+\sum_{b=1}^{a-1}|z_{K_b}|) - |z_J|-\sum_{b=1}^{s-1}|z_{K_b}|+ \epsilon (z_{[\ell]}\mapsto z_J\wedge z_{K_1}\wedge \cdots\wedge z_{K_n}) \\
&+
\left(|a_{[1,i_s]}| +\sum_{a=1}^s(|D_a|+|z_{K_a}|) -1\right) \left( |a_{[i_s+1,k]}|+|a_0|+ \sum_{a={s+1}}^n (|D_a|+|z_{K_a}|)\right) \\
&
+ 1 +
\left( |D_s|-1+|z_{K_s}|+ |a_{[j_s+1,i_s]}| \right) \left( |F|+|z_J|+\sum_{a\neq s} (|D_a|+|z_{K_a}|) + |a_0|+|a_{[1,j_s]}|+|a_{[i_s+1, k]}| \right) \\
&+
\left( |a_{[r+1,k]}|+|a_0|+|a_{[1,j_s]}| + |F|+|z_J|+\sum_{a\neq s} (|D_a|+|z_{K_a}|)  \right)
|a_{[j_s+1,r]}| \\
&+
(|F|+|z_J|)|a_{[r+1,i_s]}|+|F||z_{K_s}| + \sum_{a=s+1}^n \big(|D_a|+|z_{K_a}|\big)|a_{[r+1,j_a]}|+\sum_{a=1}^{s-1}(|D_a|+|z_{K_a}|) (|a_{[r+1,k]}|+|a_0|+|a_{[1,j_a]}|)  \\
& \qquad + \sum_{a=s+1}^n |D_a| \left(|z_{K_s}|+|z_J|+\sum_{b=s+1}^{a-1}|z_{K_b}|\right) + \sum_{a=1}^{s-1} |D_a| \left(
|z_{K_s}|+|z_J|+\sum_{b=s+1}^n|z_{K_b}|+\sum_{b=1}^{a-1} |z_{K_b}| \right) \\
& \qquad + 
(|a_{[r+1,k]}|+|a_0|)|a_{[1,r]}| + \epsilon (z_{[\ell]}\mapsto z_{K_s}\wedge z_J\wedge z_{K_{s+1}}\wedge \cdots\wedge z_{K_n}\wedge z_{K_1}\wedge \cdots \wedge z_{K_{s-1}}) \\[1em]
&=
1+(|D_s|-1) \left(|F|+\sum_{a=1}^{s-1}|D_a|\right) +\sum_{a<s}|D_a|\sum_{a>s}|D_a|
\end{align*}
\end{proof}

\subsection{Closed string action and cyclic braces}
Finally, let's examine how the cyclic braces interplay with the closed string action in (\ref{hat_ml_eq}).
Recall that we have introduced the closed string action in (\ref{hat_ml_eq}) and have explored its relation to the usual (open-closed) brace in Lemma \ref{brace_brace_lem}.
Now, it is natural to further explore how it interacts with the cyclic braces as well.
Indeed, one key advantage of the cyclic brace approach to the BV structure in this paper is that it can be easily extended to the case of OCHA.

\begin{lem}
	\label{cyclic_brace_with_closed_string_action_lem}
	For any $\ml \in \tilde C^\bullet (Z, Z)$, one has
	\begin{align*}
		& \widehat{\ml}\big(D\{E_1, \dots, E_m, \Delta \} \big)  \\
		&
		=
		(-1)^{|\ml|\sum_{i=1}^m|E_i|} \ \widehat{\ml}(D) \{E_1,\dots, E_m, \Delta \} 
		+
		\sum_{i=1}^m (-1)^{|\ml|\sum_{j=i+1}^m|E_j|}D\{E_1,\dots, E_{i-1}, \widehat{\ml}(E_i), E_{i+1},\dots, E_m, \Delta \}
	\end{align*}
\end{lem}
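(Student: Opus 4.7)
The plan is to use the non-degeneracy of $\omega$: pairing both sides of the desired identity with an arbitrary $a_0 \in A$ reduces the problem to checking equality of two explicit sums of $\omega$-expressions. Throughout, I would keep in mind the philosophy emphasized just before the statement: since $\widehat{\ml}$ only touches $Z$-inputs while the first-order cyclic brace only shuffles $A$-inputs around the anchor $a_0$, the two operations essentially commute, and the identity should be a direct cyclic analogue of the second formula in Lemma \ref{brace_brace_lem}.

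First, I would expand the left-hand side in two stages. Applying the definition (\ref{hat_ml_eq}) of $\widehat{\ml}$ produces a sum over partitions $[\ell] = J_1 \sqcup J_2$ in which $\ml_{|J_1|}(z_{J_1})$ appears as a single additional $Z$-input inserted at the front. Then applying Definition \ref{cyclic_brace_defn} expands the resulting cyclic brace. Each term in the total expansion has the schematic form
\[
\omega\bigl(D(z_{L_0};\ldots,E_1(z_{L_1};\ldots),\ldots,E_m(z_{L_m};\ldots),\ldots),\ \one\bigr),
\]
where $z_{J_2}$ is further partitioned into $L_0 \sqcup L_1 \sqcup \cdots \sqcup L_m$, and the extra input $\ml_{|J_1|}(z_{J_1})$ must land inside exactly one of these $m+1$ blocks.

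The structural key step is then to reorganize the sum by the position of $\ml_{|J_1|}(z_{J_1})$. Collecting the terms in which it lands in $L_0$ and reversing the two expansions above recombines them into $\widehat{\ml}(D)\{E_1,\ldots,E_m,\Delta\}$; collecting those for which it lands in $L_i$ with $i \geq 1$ recombines them into $D\{E_1,\ldots,\widehat{\ml}(E_i),\ldots,E_m,\Delta\}$. Term by term this matches the right-hand side, so the lemma holds at the level of unsigned expressions.

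The only remaining task, and the main obstacle, is the Koszul sign verification. Because the anchor $a_0$ and the cyclic shuffling act on the $A$-side whereas $\widehat{\ml}$ acts on the $Z$-side, the sign contributions arising from the cyclic reshuffling around $a_0$ (the terms $|a_{[r+1,j_p]}|(|E_p|+|z_{L_p}|)$ and $|a_{[1,r]}|(|a_{[r+1,k]}|+|a_0|)$ in Definition \ref{cyclic_brace_defn}) appear identically on both sides. Consequently the nontrivial part of the sign check reduces to re-associating the $Z$-partition $[\ell] = J_1 \sqcup L_0 \sqcup L_1 \sqcup \cdots \sqcup L_m$ and sliding $\widehat{\ml}$ past the $E_j$'s that sit to the right of the chosen insertion site. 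This is precisely the sign computation performed in the proof of the second formula of Lemma \ref{brace_brace_lem} (see \cite{Yuan_ocha_hochschild}), and it yields the claimed prefactors $(-1)^{|\ml|\sum_{i=1}^m|E_i|}$ and $(-1)^{|\ml|\sum_{j=i+1}^m|E_j|}$. I expect the sign bookkeeping to be tedious but routine, with no new phenomena beyond what already occurs in the non-cyclic case.
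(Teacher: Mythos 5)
Your proposal is correct and follows essentially the same route the paper indicates: the paper omits the details, saying the proof is the same argument as the second formula of Lemma \ref{brace_brace_lem}, and your expansion-and-regrouping by the landing position of $\ml_{|J_1|}(z_{J_1})$, with the cyclic-shuffle signs canceling identically on both sides, is exactly that argument carried out for the cyclic brace with $\Delta$ in the rightmost slot (the only case the paper needs or claims without caveat).
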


The proof follows essentially the same argument as in Lemma \ref{brace_brace_lem}, so let's omit the details here.
We remark that when the symbol $\Delta$ appears in non-rightmost positions, the sign requires more careful verification. However, such cases are not needed for our main results, and we leave their detailed analysis to the interested reader.

\section{Normalized Hochschild cohomology}
An \textbf{\textit{OCHA}} (\textit{open-closed homotopy algebra}) is a tuple $(Z, A, \ml, \q)$, or simply a pair $(\ml,\q)$ if the context is clear, that consists of an $L_\infty$ algebra $(Z,\ml)$ with $\ml \in \tilde C^\bullet(Z, Z)$
and an element $\q=\{\q_{\ell,k}: \ell,k\geqslant 0, (\ell,k)\neq (0,0) \}$ in $C^{\bullet, \bullet}(Z; A, A)$ such that $|\q|=|\q_{\ell,k}|=1$ and
\begin{equation*}
	\q\{\q\} = \widehat{\ml} (\q)
\end{equation*}
where use the braces in (\ref{brace_oc_eq}) and the closed-string action in (\ref{hat_ml_eq}).
Here the degree may deviate from standard conventions, but we can achieve the desired sign by utilizing shifted degrees, if needed.

Following \cite{Yuan_ocha_hochschild}, there is an open-closed analogue of Hochschild cohomology defined as follows:
For $D, E\in C^{\bullet,\bullet}(Z; A, A)$, we put
$
[E,D]=E\{D\} - (-1)^{|D||E|} D\{E\}
$
for the brace in (\ref{brace_oc_eq}).
Given an OCHA $(\ml,\q)$, we define
the \textit{open-closed Hochschild differential}
\begin{equation}
	\label{delta_ocha_eq}
	\delta=\delta_{(\ml,\q)} : C^{\bullet,\bullet}(Z; A, A) \to C^{\bullet, \bullet}(Z; A, A)
\end{equation}
by
\begin{align*}
	\delta(D) =\q\{D\} - (-1)^{|D|} D\{\q\} + (-1)^{|D|} \widehat{\ml}(D)
	\equiv [\q, D] +(-1)^{|D|} \widehat{\ml}(D)
\end{align*}
Recall that $|\q|=|\ml|=1$ in our sign convention, so $|\q\{D\}|=|D\{\q\}|=|\widehat{\ml}(D)|=|D|+1$.
It is proved in \cite{Yuan_ocha_hochschild} that $\delta=\delta_{(\ml,\q)}$ is a differential, namely, $\delta^2=0$. Therefore, we can define the \textit{(open-closed) Hochschild cohomology} as the cohomology of this complex:
\[
HH(Z; A, A) = HH(Z; A, A)_{(\ml,\q)} :=H(C^{\bullet,\bullet}(Z;A,A), \delta)
\]

\begin{rmk}
An $A_\infty$ algebra $(A,\m=\{\m_k\})$ can be viewed as an OCHA $(\ml,\q)$ by setting $\ml_\ell=0$, $\q_{\ell,k}=0$ for $\ell>0$, and $\q_{0,k}=\m_k$.
One can check the open-closed Hochschild cohomology retrieves the ordinary Hochschild cohomology of the $A_\infty$ algebra $(A,\m)$.
\end{rmk}

\begin{defn}
	\label{unital_defn}
	(i) An OCHA $(Z, A, \ml, \q)$ is called \textbf{\textit{$\omega$-cyclic}} if $\q$ is $\omega$-cyclic in the sense of Definition \ref{cyclic_defn}.
	If the context is clear, we may just call it \textbf{\textit{cyclic}} for simplicity.
	(ii) An OCHA $(Z, A, \q, \ml)$ is called \textbf{unital} if there is an element $\one$, called a \textbf{unit}, with $|\one|=-1$ and
	\begin{itemize}
		\itemsep 0pt
		\item $\q_{0,2}(\one,a)=(-1)^{|a|-1}\q_{0,2}(a,\one)=a$;
		\item $\q_{\ell,k}(\dots; \dots \one \dots)=0$ for $(\ell,k)\neq (0,2)$ 
	\end{itemize}
\end{defn}

\subsection{Normalized Hochschild cochains}
\label{s_normalized_Hochshild}
From now on, we always assume $(Z,A,\ml,\q)$ is a cyclic unital OCHA with the unit $\one$.
An element $D$ in $C^{\bullet,\bullet}(Z; A, A)$ is called \textit{normalized} if $D(z_1,\dots, z_\ell; a_1,\dots, a_k)=0$ whenever one of $a_i$'s is $\one$ (cf. \cite[1.5.7]{loday2013cyclic}).

Denote by $\overline{C}^{\bullet,\bullet}(Z; A, A)$ the subspace of normalized Hochschild cochains.

\begin{prop}
	The normalized subspace $\overline{C}^{\bullet,\bullet}(Z; A, A)$ is preserved by the differential $\delta$; namely, if $D$ is normalized, then so is $\delta(D)$.
\end{prop}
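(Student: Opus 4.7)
The plan is to fix a Hochschild cochain $D\in \overline{C}^{\bullet,\bullet}(Z;A,A)$, assume some input slot $a_s = \one$, and show $\delta(D)(z_{[\ell]};a_1,\ldots,a_k)=0$ by analyzing each of the three summands of $\delta(D) = \q\{D\} - (-1)^{|D|} D\{\q\} + (-1)^{|D|}\widehat{\ml}(D)$ separately.

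First, the closed-string piece $\widehat{\ml}(D)$ is trivial: by \eqref{hat_ml_eq}, $\ml$ acts only on the $z$-slots and leaves the $a$-slots untouched, so normalization of $D$ forces this summand to vanish identically whenever any $a_i=\one$.

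Next, I expand $\q\{D\}$ using \eqref{brace_oc_eq}. A general term is $\pm\q(z_J;a_{I_0},D(z_L;a_K),a_{I_1})$. If $\one\in a_K$, the term vanishes by normalization of $D$. Otherwise $\one$ is a direct input of $\q$ alongside at least one more input, so by the unit axiom $\q_{\ell',k'}(\ldots;\ldots\one\ldots)=0$ for $(\ell',k')\neq(0,2)$, the only survivors require $\q=\q_{0,2}$ taking exactly the two arguments $\one$ and $D(\ldots)$. Because $[k] = I_0 \dot\sqcup K \dot\sqcup I_1$ consists of consecutive ordered blocks, this forces either $(I_0,K,I_1) = (\{1\}, \{2,\ldots,k\}, \emptyset)$ (only possible when $s=1$) or $(I_0,K,I_1) = (\emptyset, \{1,\ldots,k-1\}, \{k\})$ (only possible when $s=k$); in particular, $\q\{D\}$ contributes nothing when $1<s<k$. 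I will apply the explicit unit identity $\q_{0,2}(\one,a)=a$ and $\q_{0,2}(a,\one)=(-1)^{|a|-1}a$ to simplify these two boundary contributions.

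Analogously, I expand $D\{\q\}$ as $\sum \pm D(z_J;a_{I_0},\q(z_L;a_K),a_{I_1})$. If $\one\in a_{I_0}\cup a_{I_1}$, then $D$ itself vanishes by normalization, so $\one$ must sit inside $a_K$; again the unit axiom forces $\q=\q_{0,2}$ with $|L|=0$ and $a_K$ equal to $(\one,a_{s+1})$ (requiring the block $K=\{s,s+1\}$, so $s\leqslant k-1$) or $(a_{s-1},\one)$ (requiring $K=\{s-1,s\}$, so $s\geqslant 2$). In the interior range $1<s<k$ both configurations occur, and after applying the unit identities each produces $\pm D(z_{[\ell]};a_1,\ldots,\widehat{a_s},\ldots,a_k)$; the two Koszul signs differ precisely by the transposition of $\one$ past $a_{s-1}$ combined with the $(-1)^{|a_{s-1}|-1}$ coming from $\q_{0,2}(a_{s-1},\one)$, which produces cancellation. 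In the boundary cases only one configuration of each of $\q\{D\}$ and $D\{\q\}$ survives, and I check that these two surviving contributions cancel against each other in the combination $\q\{D\} - (-1)^{|D|}D\{\q\}$.

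The main obstacle is purely the sign bookkeeping, which is delicate because $|\one|=-1$ enters repeatedly both through Koszul signs (from moving $\one$ past other entries in \eqref{brace_oc_eq}) and through the asymmetric unit identity $\q_{0,2}(a,\one)=(-1)^{|a|-1}a$. The actual combinatorics is minimal: the full case analysis reduces to three situations ($s=1$, $s=k$, and $1<s<k$), each producing at most two nonzero terms that must be matched. Once the signs are checked in these three cases the proposition follows.
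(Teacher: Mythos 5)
Your proposal is correct and follows essentially the same route as the paper's proof: unitality forces all surviving terms to involve $\q_{0,2}$ with $\one$, normalization of $D$ kills terms where $\one$ lands inside $D$, the two interior contributions of $D\{\q\}$ cancel via the unit identities, the boundary contributions of $\q\{D\}$ and $D\{\q\}$ cancel against each other, and $\widehat{\ml}(D)$ is normalized since the closed-string action never touches the $A$-inputs. The sign bookkeeping you defer is exactly the computation the paper carries out (for the interior and one boundary case), and it works out as you describe.
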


\begin{proof}
Since \( \q \) is actually not normalized, additional care is needed.
Assume $D$ is a normalized cochain. 
For $0\neq i\neq k$, since $\q$ is unital and $D$ is normalized, we obtain
\begin{align*}
& [\q,D] (z_{[\ell]}; a_1,\dots, a_i, \one, a_{i+1},\dots, a_k)\\
&=
\sum(-1)^{1+|D|+|\q|(|a_{[i-1]}|+|z_{[\ell]}|)} \ D(z_{[\ell]}; a_1,\dots, \q_{0,2}(a_i,\one), a_{i+1},\dots, a_k)  \\
&+
\sum (-1)^{1+|D|+|\q|(|a_{[i]}|+|z_{[\ell]}|)} \ D(z_{[\ell]}; a_1,\dots, a_i, \q_{0,2}(\one, a_{i+1}), \dots, a_k) =0
\end{align*}
For $i=0$, we have
\begin{align*}
& [\q,D] (z_{[\ell]};  \one, a_1,\dots, a_k)\\
&=
(-1)^{|\one|(|D|+|z_{[\ell]}|)} \ \q_{0,2}(\one, D(z_{[\ell]}; a_1,\dots, a_k))
+(-1)^{1+|D|+|\q||z_{[\ell]}|} D(z_{[\ell]}; \q_{0,2}(\one, a_1), a_2,\dots, a_k) =0
\end{align*}
where we recall that $|\one|=-1$ and $|\q|=1$.
For $i=n$, the computation is similar and omitted.
Finally, since \( \widehat{\ml} \) does not involve inputs from $A$ and the normalization condition depends only on the unit \( \one \) in $A$, it follows that \( \widehat{\ml}(D) \) is also normalized.
\end{proof}

Thanks to the above proposition, we can define the \textbf{\textit{normalized (open-closed) Hochschild cohomology}} as
\[
\overline{HH}(Z;A,A) = \overline{HH}(Z;A,A)_{(\ml,\q)} = H(\overline{C}^{\bullet,\bullet}(Z; A, A), \delta)
\]
By construction, the first-order and second-order cyclic braces in Definitions \ref{cyclic_brace_defn} and \ref{cyclic_brace_diamond_defn} preserve the normalized subspace $\overline{C}^{\bullet,\bullet}(Z; A, A)$.

\subsection{Cyclic braces with cyclic and unital OCHA}

The following lemma describes additional properties of the first-order cyclic brace $\q\{D_1,\dots, D_s,\Delta, D_{s+1},\dots, D_n\}$
when $\q$ belongs to a cyclic and unital OCHA. The behavior differs significantly in three distinct cases: $n=0, n=1, n\geqslant 2$

\begin{lem}
	\label{q_first_order_brace_lem}
	\begin{enumerate}
		\item $
		\q\{\Delta\}=\Delta\q=0
		$
		\item $
		\q\{D, \Delta\}=-\q\{\Delta, D\} =D
		$
		\item For $0\leqslant s\leqslant n$ with $n\geqslant 2$, we have
		\[
		\q\{D_1,\dots, D_s, \Delta, D_{s+1},\dots, D_n\} =0
		\]
	\end{enumerate}
	
\end{lem}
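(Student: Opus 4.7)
The plan is to reduce all three assertions to a single mechanism applied to each summand of the cyclic brace expansion (Definition \ref{cyclic_brace_defn}). Each term has the shape $\pm \omega(\q(z_{L_0};\,\text{slots}),\,\one)$. Two ingredients then collapse almost everything: the cyclic symmetry of $\q$ (Definition \ref{cyclic_defn}) rotates $\one$ out of the pairing slot into one of the input slots of the outer $\q$, and the unitality clause $\q_{\ell,k}(\dots,\one,\dots)=0$ for $(\ell,k)\neq(0,2)$ then kills that term unless the outer $\q$ is precisely the product $\q_{0,2}$. Hence a summand can survive only when $|L_0|=0$ and the outer $\q$ has exactly two open-input slots. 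All three parts then reduce to counting open slots in the outer $\q$ of each summand.

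For Part (3), a summand of $\q\{D_1,\dots,D_s,\Delta,D_{s+1},\dots,D_n\}$ places all $n$ brace arguments $D_i$, the distinguished element $a_0$, and possibly additional $a_j$'s into the inputs of the outer $\q$. Hence the outer $\q$ has at least $n+1\geqslant 3$ open inputs, so the rotation-plus-unitality principle forces every summand to vanish, giving $0$.

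Part (1) splits componentwise. On $\q_{\ell,m}$ with $(\ell,m)\neq (0,2)$, the same cyclic-rotation-plus-unitality argument forces all terms of $\Delta\q_{\ell,m}$ to vanish. The only remaining case is $\q_{0,2}$ (so $\ell=0,\,k=1$), where $\Delta\q_{0,2}$ applied to $a_1$ produces exactly two terms $\pm\omega(\q_{0,2}(a_0,a_1),\one)$ and $\pm\omega(\q_{0,2}(a_1,a_0),\one)$; after cyclic rotation and the identity $\q_{0,2}(\one,x)=x$ these become $\pm\omega(a_0,a_1)$ and $\pm\omega(a_1,a_0)$, and cancel by the skew-symmetry of $\omega$. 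For Part (2), the open-slot count in $\q\{D,\Delta\}$ is $(j_1-r)+1+(k-i_1)+1+r = k+j_1-i_1+2$, which equals $2$ only when $j_1=0$ and $i_1=k$ (forcing $r=0$); combined with $|L_0|=0$, this forces $L_1=[\ell]$, so the unique surviving configuration is
\[
\omega\bigl(\q_{0,2}(D(z_{[\ell]};a_{[k]}),\,a_0),\,\one\bigr),
\]
which by cyclic rotation and $\q_{0,2}(\one,x)=x$ reduces to $\pm\omega(D(z;a_{[k]}),a_0)$, giving $\q\{D,\Delta\}=\pm D$. The same argument for $\q\{\Delta,D\}$ produces $\omega(\q_{0,2}(a_0,D(z;a_{[k]})),\one)=\pm\omega(a_0,D(z;a_{[k]}))$, which differs from the previous case precisely by the skew-symmetry swap $\omega(a_0,X)=\mp\omega(X,a_0)$, yielding the opposite sign and hence $-\q\{\Delta,D\}=D$.

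The hard part is purely sign bookkeeping: the Koszul sign $\ast$ from Definition \ref{cyclic_brace_defn}, the rotation factor $(-1)^p$ from Definition \ref{cyclic_defn}, the skew-symmetry of $\omega$, the unital identities $\q_{0,2}(\one,a)=a=(-1)^{|a|-1}\q_{0,2}(a,\one)$, and the shifted-degree convention $|\q|=1$, $|\one|=-1$ must compose to exactly $+1$ for $\q\{D,\Delta\}=D$. The combinatorial content is settled by the input-counting above, so I would fix the normalization by an explicit sign computation on the single surviving configuration of (2) and then note that the cancellation in (1) and the universal vanishing in (3) are insensitive to the precise signs.
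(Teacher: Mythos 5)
Your proposal is correct and uses essentially the paper's argument: cyclically rotating $\one$ into an input slot of the outer $\q$ and then invoking unitality kills every summand unless the outer factor is $\q_{0,2}$, which disposes of (3), reduces (1) to the $\ell=0$, $k=1$ case cancelled by skew-symmetry of $\omega$, and isolates the single surviving configuration giving $\q\{D,\Delta\}=D$ (up to the explicit sign check you defer, which the paper carries out in a few lines). The only minor deviation is that the paper obtains $\q\{\Delta,D\}=-\q\{D,\Delta\}$ from the cyclic brace relation $\q\{\Delta\}\{D\}=\q\{D,\Delta\}+\q\{\Delta,D\}=0$ together with part (1), whereas you recompute $\q\{\Delta,D\}$ directly and compare via skew-symmetry; both routes are fine.
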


\begin{proof}
	(1) Recall $\q\{\Delta\}=\Delta\q$ by definition. Since $\q$ is cyclic and $\one$ is a unit, we conclude that
	\[
	\omega\Big( (\Delta \q)_{\ell,k}  (z_{[\ell]}; a_1,\dots, a_k) , a_0 \Big)
	=
	\sum  (-1)^{|a_{[i]}|(|a_{[i+1,k]}|+|a_0|) } \ \omega(  \q(z_{[\ell]}; a_{i+1},\dots, a_k, a_0, a_1,\dots, a_i) , \one ) =0
	\]
	whenever $\ell\geqslant 1$ or $k\geqslant 2$. 
	It remains to discuss the exceptional cases.
	Recall that $(\ell,k)\neq (0,0)$ by definition. Thus, it suffices to assume $\ell=0$ and $k=1$,
	\begin{align*}
		\omega \Big( \Delta\q(a_1), a_0\Big) 
		&= \omega \Big(\q_{0,2}(a_1,a_0),\one \Big) +  (-1)^{|a_0||a_1|}\omega \Big( \q_{0,2}(a_0,a_1), \one \Big) \\
		&= (-1)^{|a_0|+|a_1|} \Big( \omega \Big(\q_{0,2}(\one, a_1),a_0 \Big) +  (-1)^{|a_0||a_1|}\omega \Big( \q_{0,2}(\one, a_0), a_1 \Big) \Big) \\
		&= (-1)^{|a_0|+|a_1|} \Big( \omega (a_1,a_0) + (-1)^{|a_0||a_1|}\omega (a_0, a_1)
		\Big)
	\end{align*}
	This vanishes exactly due to the skew-symmetry of $\omega$. Hence, $\q\{\Delta\}=\Delta\q=0$.
	
(2)	We first observe that $\q\{\Delta\}\{D\}=\q\{D,\Delta\}+\q\{\Delta, D\}=0$ by Theorem \ref{cyclic_brace_relation_thm} and the item (1) proved above. Thus, it suffices to compute $\q\{D,\Delta\}$. We compute
\begin{align*}
	\omega \Big(
	\q\{D,\Delta\} (z_{[\ell]}; a_{[k]}) \ , \ a_0 \Big)
	&=\sum(-1)^{\ast} \omega\Big(\q(z_J; a_{r+1},\dots, D(z_K; a_{j+1},\dots, a_i), \dots, a_k, a_0,\dots, a_r) \ , \ \one \Big) \\
	&= \omega\Big(\q_{0,2}(D({z_{[\ell]}}; a_{[k]}) , a_0 ) \ , \ \one \Big)
	= (-1)^{|D|+|z_{[\ell]}|+|a_{[k]}|+|a_0|} \ \omega\Big(\q_{0,2} (\one, D(z_{[\ell]}; a_{[k]}) ) \ , \ a_0 \Big) \\
	&=
	\omega\Big(D(z_{[\ell]}; a_{[k]}) \ , \ a_0\Big)
\end{align*}

(3) Finally, the last item of the lemma easily follows from the cyclic and unital conditions.
\end{proof}

Similarly, for $\q$ in a cyclic unital OCHA, the second-order cyclic braces involving $\q$ admit extra properties. Rather than developing the most general theory, we focus specifically on the cases needed for our applications:

\begin{lem}
	\label{q_second_order_brace_lem}
	We have
	\begin{align*}
		\q\{D\{\lozenge\}\}&= \q\{D\{E,\lozenge\}\}=0 \\
		\q\{D\{\lozenge\}, E\} &= (-1)^{1+|D||E|} \ \q\{E, D\{\lozenge\}\}
	\end{align*}
\end{lem}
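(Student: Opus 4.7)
The strategy is to argue directly from Definition \ref{cyclic_brace_diamond_defn}: expand each second-order cyclic brace in the statement as a signed sum of terms of the form $\pm \omega(\q(\ldots), \one)$, then exploit the interplay of the $\omega$-cyclicity and unitality of $\q$ to collapse the sum drastically. In each case, unfolding the definition writes the left-hand side as a sum of pairings $\omega(\q(z_P; \ldots, D(\ldots, a_0, \ldots), \ldots), \one)$ with $a_0$ anchored inside $D$, and (in parts (2) and (3)) with $E$ placed appropriately either inside $D$ or as a second argument of $\q$.

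The crucial reduction is to use the $\omega$-cyclic property of $\q$ (Definition \ref{cyclic_defn}) to rotate $\one$ from the right slot of $\omega$ into the $A$-inputs of $\q$. Once $\one$ appears as an input of $\q$, unitality forces $\q_{\ell,k}(\ldots, \one, \ldots) = 0$ except when $\q = \q_{0,2}$, in which case we further simplify via $\q_{0,2}(\one, x) = x$ and $\q_{0,2}(x, \one) = (-1)^{|x|-1} x$. Thus every contribution collapses to an expression involving only $\q_{0,2}$. For parts (1) and (2), the surviving $\q_{0,2}$-contributions split into two families, distinguished by whether the $a_0$-carrying cochain $D(\ldots, a_0, \ldots)$ sits in the first or the second slot of $\q_{0,2}$; pairwise cancellation of these two families, combined with the skew-symmetry $\omega(x, y) = -(-1)^{|x||y|} \omega(y, x)$, gives zero. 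For part (3), both sides reduce to $\omega(\q_{0,2}(X, Y), \one)$-type terms with $\{X, Y\} = \{D(\ldots, a_0, \ldots), E(\ldots)\}$ appearing in opposite orders; swapping them via the skew-symmetry of $\omega$ and the cyclic property of $\q_{0,2}$ produces precisely the claimed sign $(-1)^{1+|D||E|}$.

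The main obstacle is the sign bookkeeping. The Koszul sign $\dagger$ in Definition \ref{cyclic_brace_diamond_defn} is intricate, and it interacts nontrivially with the cycling sign from the cyclic axiom of $\q$ (the factor $(-1)^{|\one|\sum|x_i|} = (-1)^{-\sum|x_i|}$) and with the skew-symmetry sign of $\omega$. I would organize the calculation by separating contributions according to the position of $a_0$ in $D$'s $A$-inputs and the slot of $\q_{0,2}$ occupied by the distinguished cochain, and then verify case-by-case that the sign difference between paired terms is exactly $|D||a_j| + 1 \pmod{2}$ in parts (1)-(2) and $|D||E| + 1 \pmod{2}$ in part (3). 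This is tedious but routine, since the relevant pieces of $\dagger$ that distinguish Family A from Family B (or that distinguish $\q\{D\{\lozenge\}, E\}$ from $\q\{E, D\{\lozenge\}\}$) are precisely the Koszul signs produced by commuting $D$ past the $a_j$'s (respectively $E$).
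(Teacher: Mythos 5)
Your proposal follows essentially the same route as the paper's own proof: expand the second-order cyclic braces by Definition \ref{cyclic_brace_diamond_defn}, use the cyclicity of $\q$ to rotate $\one$ into its inputs so that unitality kills everything except $\q_{0,2}$-contributions, and then observe that the two surviving families (distinguished by which slot of $\q_{0,2}$ carries $D(\dots,a_0,\dots)$) cancel by skew-symmetry of $\omega$, while in the last identity the swap of $D$ and $E$ inside $\q_{0,2}$ produces the sign $(-1)^{1+|D||E|}$. The only caveat is that the precise relative sign in the cancellation involves the full (input-dependent) Koszul degree of $D(\dots,a_0,\dots)$ rather than just $|D||a_j|$, but since you flag this as the bookkeeping to be verified, the argument is the same as the paper's.
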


\begin{proof}
	The cyclic and unital properties imply
	\begin{align*}
		&\omega\Big(\q\{D\{\lozenge\}\}(z_{[\ell]}; a_{[k]}) \ , a_0 \Big)
		=
		\sum (-1)^{\delta_1} \
		\omega\Big( \q(z_J; a_{r+1},\dots, a_j, D(z_K; a_{j+1},\dots, a_k , a_0, a_1,\dots, a_i ) , \dots, a_r ) \ , \one \Big) \\
		&=
		\sum (-1)^{|a_{[1,r]}|(|a_{[r+1,k]}|+|a_0|)} \ \omega\Big( \q_{0,2}(D(z_{[\ell]}; a_{r+1},\dots, a_k, a_0, a_1,\dots, a_{r-1}) , a_r) \ , \ \one \Big) \\
		&+
		\sum (-1)^{(|D|+|z_{[\ell]}|)|a_r| + |a_{[1,r-1]}|(|a_{[r,k]}|+|a_0|)} \ \omega \Big( \q_{0,2} (a_{r} , D(z_{[\ell]}; a_{r+1},\dots, a_k, a_0, a_1,\dots, a_{r-1}) ) , \one  \Big) \\
		&=
		\sum (-1)^{|a_{[1,r]}|(|a_{[r+1,k]}|+|a_0|)+|D|+|z_{[\ell]}|+ |a_{[0,k]}|} \ \omega\Big( \q_{0,2}(\one , D(z_{[\ell]}; a_{r+1},\dots, a_k, a_0, a_1,\dots, a_{r-1}))  , a_r  \Big) \\
		&+
		\sum (-1)^{(|D|+|z_{[\ell]}|)|a_r| + |a_{[1,r-1]}|(|a_{[r,k]}|+|a_0|) +|D|+|z_{[\ell]}|+ |a_{[0,k]}|} \ \omega \Big( \q_{0,2} (\one, a_{r}) \ , \ D(z_{[\ell]}; a_{r+1},\dots, a_k, a_0, a_1,\dots, a_{r-1} )\Big) \\
		&=
		\sum (-1)^{|a_{[1,r-1]}|(|a_{[r,k]}|+|a_0|)+|D|+|z_{[\ell]}|+ |a_{[0,k]}|}
		\Big( \omega (D \ , \ a_r ) + (-1)^{(|D|+|z_{[\ell]}|)|a_r| + (|a_0|+|a_{[1,r-1]}|+|a_{[r+1,k]}|) |a_r|} \  \omega (a_r \ , \ D ) 
		\Big)=0
	\end{align*}
	where the first equation comes from the unitality, the second equation is derived from the cyclicity, and the last equation follows from the unitality and the skew-symmetry of $\omega$.
	Besides, the proof of $\q\{D\{E,\lozenge\}\}=0$ is almost the same.
	On the other hand, we compute
	\begin{align*}
		& \omega\Big( \q\{D\{\lozenge\}, E\} (z_{[\ell]}; a_{[k]}) \ , \ a_0 \Big)  \\
		&=
		\sum (-1)^{\epsilon_1} \omega \Big(\q(z_{J_0}; a_{r+1},\dots, D(z_{J_1}; a_{s+1},\dots, a_0, \dots, a_t), \dots, E(z_{J_2}; a_{j+1},\dots, a_i), \dots, a_r) \ , \ \one \Big) \\
		&=
		\sum (-1)^{\epsilon_2} \ \omega\Big(\q_{0,2}(D(z_{K}; a_{s+1},\dots, a_k, a_0,\dots, a_t), E(z_{L}; a_{t+1},\dots, a_s) ) \ , \ \one \Big) \\
		&=
		\sum (-1)^{\epsilon_3} \omega\Big(\q_{0,2} (\one, D(z_{K}; a_{s+1},\dots, a_0,\dots, a_t) )  \ , \ E(z_L; a_{t+1},\dots, a_s)\Big) \\
		&=
		\sum (-1)^{\epsilon_4} \omega\Big(E(z_L; a_{t+1},\dots, a_s) \ , \ \q_{0,2}(\one, D(z_K; a_{s+1},\dots, a_0,\dots, a_t))\Big) \\
		&=
		\sum (-1)^{\epsilon_5} \omega\Big(
		\q_{0,2}(\one,E(z_L; a_{t+1},\dots, a_s)) \ , \ D(z_K; a_{s+1},\dots, a_0,\dots, a_t)\Big) \\
		&=
		\sum (-1)^{\epsilon_6} \omega\Big(\q_{0,2} (E(z_L; a_{t+1},\dots, a_s) , D(z_K; a_{s+1},\dots, a_0,\dots, a_t)) \ , \ \one \Big) =
		(-1)^{1+|D||E|} \omega ( \q\{E, D\{\lozenge\}\} , a_0)
	\end{align*}
	where $J_0\sqcup J_1\sqcup J_2=K\sqcup L=[\ell]$ and where the second equation holds since $\one$ is a unit.
\end{proof}

\subsection{BV operator}
Since the cyclic braces preserve the normalized subspace, the BV operator $\Delta$ from (\ref{Delta_eq}) restricts to an operator on $\overline{C}^{\bullet,\bullet}(Z; A, A)$. Let's still denote this induced operator on the normalized subspace by $\Delta$.

\begin{prop}
	\label{Delta_square_zero_prop}
	$\Delta^2=0$ and $\delta \circ \Delta= \Delta \circ \delta $. In particular, it induces an operator, still denoted by $\Delta$, on the normalized Hochschild cohomology $\overline{HH}(Z; A, A)$ (Section \ref{s_normalized_Hochshild}).
\end{prop}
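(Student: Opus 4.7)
The plan is to treat the two assertions in turn, in both cases by explicit cochain-level manipulations of the first- and second-order cyclic braces.

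For $\Delta^2 = 0$, the argument reduces entirely to normalization. Since $\Delta$ preserves the normalized subspace and $\Delta D = D\{\Delta\}$, applying the defining formula \eqref{Delta_eq} twice yields
\[
\omega\bigl(\Delta^2 D(z_{[\ell]}; a_1,\dots,a_{n-2}),\ a_0\bigr) = \sum_{i,j} \pm\,\omega\bigl(D(z_{[\ell]};\ \sigma_{ij}),\ \one\bigr),
\]
where each $\sigma_{ij}$ is a cyclic arrangement of $a_0, a_1,\dots, a_{n-2}$ along with one copy of $\one$ inserted as an open-string input of $D$. Because $D$ is normalized and $\one$ is an input of $D$ in every summand, every term vanishes. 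This is the OCHA analogue of Tradler's vanishing argument.

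For $\delta\Delta = \Delta\delta$, I would split $\delta D = [\q, D] + (-1)^{|D|}\widehat{\ml}(D)$ and treat the open-string and closed-string parts separately. The closed-string part is immediate: Lemma \ref{cyclic_brace_with_closed_string_action_lem} with $m=0$ reads $\widehat{\ml}(D\{\Delta\}) = \widehat{\ml}(D)\{\Delta\}$, so $\widehat{\ml}$ and $\Delta$ commute as operators, and combined with the degree shift $|\Delta D| = |D| - 1$ that appears in the sign prefactor, the closed-string contribution to $\Delta\delta - \delta\Delta$ vanishes.

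For the open-string part, the plan is to expand the four iterated braces $\q\{D\}\{\Delta\}$, $D\{\q\}\{\Delta\}$, $D\{\Delta\}\{\q\}$, and $\q\{D\{\Delta\}\}$ using the cyclic brace relations of Theorem \ref{cyclic_brace_relation_thm}. Cases (i) and (ii) of that theorem handle the first three, producing terms of the shape $\cdot\{\Delta,\cdot\}$, $\cdot\{\cdot,\Delta\}$, and $\cdot\{\cdot\{\lozenge\}\}$. Proposition \ref{cyclic_brace_with_omega_cyclic_prop} rewrites $\q\{D\{\Delta\}\}$ as $\pm D\{\q\{\lozenge\}\}$ via the $\omega$-cyclicity of $\q$. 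Lemma \ref{q_first_order_brace_lem} then shows $\q\{\Delta, D\} + \q\{D, \Delta\} = 0$, together with the vanishing $\q\{\ldots, \Delta, \ldots\} = 0$ when there are three or more slots, while Lemma \ref{q_second_order_brace_lem} gives $\q\{D\{\lozenge\}\} = 0$. Combined, these force $\q\{D\}\{\Delta\} = 0$, and the remaining combinations of $D\{\q\}\{\Delta\}$, $D\{\Delta\}\{\q\}$, and $D\{\q\{\lozenge\}\}$ cancel against $\q\{D\{\Delta\}\}$ via the cyclic-brace identity $D\{\q\}\{\Delta\} - D\{\Delta\}\{\q\} = D\{\q\{\lozenge\}\}$ that follows from Theorem \ref{cyclic_brace_relation_thm}.

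The principal obstacle is sign bookkeeping. The cyclic brace relations carry intricate Koszul signs mixing closed-string and open-string partitions, Proposition \ref{cyclic_brace_with_omega_cyclic_prop} contributes further signs from the cyclic reshuffle of $a_0$ and $\one$, and the prefactor $(-1)^{|D|}$ in $\delta$ shifts when $\Delta$ is applied. Matching all of these exactly so that every residual term cancels is the real technical challenge. Conceptually, however, the mechanism is clean: unitality kills all second-order $\q$-headed braces, cyclicity converts $\q$-headed first-order $\Delta$-braces into $\lozenge$-braces on $D$, and the cyclic brace relations then produce the desired commutation $\delta\Delta = \Delta\delta$.
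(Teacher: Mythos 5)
Your proposal is correct and follows essentially the same route as the paper: $\Delta^2=0$ via normalization (the paper leaves this as ``straightforward''), the closed-string part via Lemma \ref{cyclic_brace_with_closed_string_action_lem} with $m=0$, and the open-string part by expanding $\q\{D\}\{\Delta\}$, $D\{\q\}\{\Delta\}$, $D\{\Delta\}\{\q\}$ with Theorem \ref{cyclic_brace_relation_thm} and then invoking Proposition \ref{cyclic_brace_with_omega_cyclic_prop}, Lemma \ref{q_first_order_brace_lem}, and Lemma \ref{q_second_order_brace_lem} to force $\Delta(\q\{D\})=0$ and $\Delta(D\{\q\})=[\Delta D,\q]$ up to sign. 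The only difference is presentational: you defer the Koszul-sign bookkeeping, exactly as the paper does.
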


\begin{proof}
	The first result is straightforward to check. For the second, using Lemma \ref{cyclic_brace_with_closed_string_action_lem} implies that the closed string action $\widehat{\ml}$ commutes with $\Delta$, namely, $\widehat{\ml} (\Delta D) = \Delta (\widehat{\ml} (D))$. Accordingly, it remains to check $[\q,\Delta D] = \Delta ([\q, D])$.
	In fact, by the cyclic brace relations in Theorem \ref{cyclic_brace_relation_thm}, we have
	\begin{align*}
		(\Delta D)\{\q\} \equiv D\{\Delta\} \{\q\} &=D\{\Delta, \q\} +D\{\q,\Delta\} \\
		\Delta(D\{\q\})\equiv D\{\q\}\{\Delta\} &= D\{\Delta, \q\} + D\{\q,\Delta\} + D\{\q\{\lozenge\}\}
	\end{align*}
	Since $\q$ is $\omega$-cyclic, it follows from Proposition \ref{cyclic_brace_with_omega_cyclic_prop} that 
	\[
	D\{\q\{\lozenge\}\}= (-1)^{1+(|D|-1)|\q|} \q\{D\{\Delta\}\} =(-1)^{1+(|D|-1)|\q|} \q\{\Delta D\}
	\]
	Therefore,
	\[
	\Delta (D\{\q\}) = (\Delta D)\{\q\}  - (-1)^{(|D|-1)|\q|} \q\{\Delta D\} = [\Delta D, \q]
	\]
	Then, further using Theorem \ref{cyclic_brace_relation_thm} (ii) yields
	\[
	\Delta(\q\{D\}) = \q\{D\} \{\Delta\} = \q\{D,\Delta\} +\q\{\Delta, D\} + \q\{D\{\lozenge\}\} = \q\{D\{\lozenge\}\}=0
	\]
	where the vanishing is because of Lemma \ref{q_second_order_brace_lem}.
\end{proof}

\subsection{Cochain-level identities for the BV relation}
As noted before in (\ref{[DE]_intro}), a primary objective of this work is to establish a Hochschild cochain identity that governs the BV structure on Hochschild cohomology. The following result achieves this goal.

\begin{thm}
	\label{[DE]_thm}
	We have
	\begin{align*}
		[D, E] &= \Delta (\q\{D, E\} ) + \q \{ \Delta D, E\} - (-1)^{|D||E|} \q\{\Delta E, D\}  \\
		&+
		\delta \left( D\{E,\Delta\} \right)- (-1)^{|D||E|} \delta\left( E\{D, \Delta\}  \right) \\
		&+
		(\delta D) \{E, \Delta\} - (-1)^{|D||E|} (\delta E) \{D, \Delta\} \\
		&+
		(-1)^{|D|} \left( D\{\delta E, \Delta\} - (-1)^{|D||E|} E\{\delta D, \Delta\} \right)
	\end{align*}
Therefore, in the normalized Hochschild cohomology
$\overline{HH}(Z; A,A)$, we have (cf. (\ref{BV_eq_intro})):
\[
[D, E]=\Delta (D\smallsmile E)+ \Delta D\smallsmile E - (-1)^{|D||E|} \ \Delta E\smallsmile D  
\]
\end{thm}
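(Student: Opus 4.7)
The plan is to expand each term on the right-hand side using the definition of $\delta$ and the cyclic brace relations (Theorem~\ref{cyclic_brace_relation_thm}), then collect and cancel terms so that the only survivors reproduce $[D,E] = D\{E\} - (-1)^{|D||E|} E\{D\}$. More concretely, I would first expand $\delta(D\{E,\Delta\})$ by writing $\delta = [\q,\cdot] + (-1)^{|\cdot|}\widehat{\ml}$; this gives three pieces: $\q\{D\{E,\Delta\}\}$, $D\{E,\Delta\}\{\q\}$ (with sign), and $\widehat{\ml}(D\{E,\Delta\})$. The first is rewritten as a second-order cyclic brace $D\{E,\q\{\lozenge\}\}$ via Proposition~\ref{cyclic_brace_with_omega_cyclic_prop}, the second is expanded by Theorem~\ref{cyclic_brace_relation_thm}(ii), and the third is split using Lemma~\ref{cyclic_brace_with_closed_string_action_lem}. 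I would then treat $(\delta D)\{E,\Delta\}$ and $(-1)^{|D|}D\{\delta E,\Delta\}$ analogously, where substituting $\delta D$ and $\delta E$ yields iterated first-order cyclic braces such as $\q\{D\}\{E,\Delta\}$ and $D\{\q\}\{E,\Delta\}$, to be expanded by Theorem~\ref{cyclic_brace_relation_thm}(i).

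After these expansions, the combined sum $\delta(D\{E,\Delta\}) + (\delta D)\{E,\Delta\} + (-1)^{|D|}D\{\delta E,\Delta\}$ consists of a large collection of cyclic brace terms. The structure of Theorem~\ref{cyclic_brace_relation_thm} guarantees a Leibniz-type cancellation: most terms in which $\q$ is inserted inside some $\{\cdots\}$ nest appear twice with opposite signs (once from the outer $\delta$, once from the expansion of an inner $\delta D$ or $\delta E$) and so disappear. The terms that do not immediately cancel are those in which $\q$ acts on itself, i.e.\ $\q\{\q,\dots\}$ and variants with $\Delta$ or $\lozenge$; to these I would apply the OCHA equation $\q\{\q\} = \widehat{\ml}(\q)$, which in combination with the closed-string contributions from Lemma~\ref{cyclic_brace_with_closed_string_action_lem} produces a further round of cancellations. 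Finally, the cyclic-unital Lemmas~\ref{q_first_order_brace_lem} and \ref{q_second_order_brace_lem} eliminate any remaining second-order brace terms of the form $\q\{D\{\lozenge\}\}$, $\q\{D\{E,\lozenge\}\}$, $\q\{\Delta\}$, etc. After subtracting the $D \leftrightarrow E$ swap with sign $(-1)^{|D||E|}$ to symmetrize, the surviving terms assemble into $[D,E] - \Delta(\q\{D,E\}) - \q\{\Delta D, E\} + (-1)^{|D||E|}\q\{\Delta E, D\}$, yielding the cochain identity.

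For the cohomology statement, suppose $D, E$ are cocycles ($\delta D = \delta E = 0$). Then the third and fourth lines of the cochain identity vanish on the nose, while $\delta(D\{E,\Delta\})$ and $\delta(E\{D,\Delta\})$ become coboundaries and hence vanish in $\overline{HH}(Z;A,A)$. Identifying the cup product as (a cochain representative of) $\q\{D,E\}$ via the Gerstenhaber structure of \cite{Yuan_ocha_hochschild}, and likewise for $\q\{\Delta D, E\}$ and $\q\{\Delta E, D\}$, the cochain identity descends to the BV relation~(\ref{BV_eq_intro}).

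The main obstacle is the sign bookkeeping: each of the three iterated brace expansions produces sums indexed by ordered partitions and cyclic insertion positions, and matching two such sums term-by-term requires a careful Koszul analysis. A secondary, more conceptual, subtlety is that the cancellations rely heavily on the cyclic-unital simplifications of \S\ref{s_cyclic_brac_with_cyclic_cochain}: without them, spurious terms like $\q\{D\{\lozenge\}\}$ would obstruct the expected BV relation, and this is precisely the point at which the hypotheses that the OCHA is cyclic and unital enter in an essential way.
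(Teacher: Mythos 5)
Your plan is essentially the paper's own proof: the paper packages exactly this computation into Lemmas \ref{D{E}_lem} and \ref{D{E}_2_lem}, expanding $\delta(D\{E,\Delta\})$, $(\delta D)\{E,\Delta\}$, $(-1)^{|D|}D\{\delta E,\Delta\}$ and $\Delta(\q\{D,E\})$ via Theorem \ref{cyclic_brace_relation_thm}, Proposition \ref{cyclic_brace_with_omega_cyclic_prop}, Lemma \ref{cyclic_brace_with_closed_string_action_lem}, and the cyclic-unital Lemmas \ref{q_first_order_brace_lem} and \ref{q_second_order_brace_lem}, and then passing to cohomology for $\delta$-closed $D,E$ just as you describe. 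One small correction: since $\delta$ is applied only once in each term, no $\q\{\q,\dots\}$ terms ever arise, so the OCHA equation $\q\{\q\}=\widehat{\ml}(\q)$ plays no role here; the terms that survive the Leibniz-type cancellation are instead absorbed by the cyclic-unital identities, in particular $\q\{D\{E\},\Delta\}=D\{E\}$ (Lemma \ref{q_first_order_brace_lem}), which is precisely what produces the bracket $[D,E]$.
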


The above cochain-level identities above facilitate the proof of the BV structure for open-closed Hochschild cohomology.
Meanwhile, this result naturally extends to both associative and $A_\infty$ algebras, as they can be regarded as special cases of OCHAs.

To prove Theorem \ref{[DE]_thm}, we need the following two lemmas. One one hand, we build on the previous results, using the cyclic and unital properties of the OCHA, to appropriately gather $\delta$-boundary elements as follows.
\begin{lem}
	\label{D{E}_lem}
	One has
	\[
	D\{E\} = \q\{\Delta D, E\} +\q\{D\{\lozenge\}, E\} + \delta\big(D\{E,\Delta\}\big) + (\delta D)\{E,\Delta\} + (-1)^{|D|} \ D\{\delta E,\Delta\}
	\]
\end{lem}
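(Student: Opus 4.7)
The plan is to compute the three $\delta$-involving summands on the right-hand side by expanding each $\delta$ via its definition $\delta(F) = \q\{F\} - (-1)^{|F|}F\{\q\} + (-1)^{|F|}\widehat{\ml}(F)$, and then to apply in order: the cyclic brace relations of Theorem \ref{cyclic_brace_relation_thm}, the cyclicity identity of Proposition \ref{cyclic_brace_with_omega_cyclic_prop}, the $\widehat{\ml}$-distribution formula of Lemma \ref{cyclic_brace_with_closed_string_action_lem}, and the unital reductions of Lemmas \ref{q_first_order_brace_lem}--\ref{q_second_order_brace_lem}, to show that the result collapses to $D\{E\} - \q\{\Delta D, E\} - \q\{D\{\lozenge\}, E\}$. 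The argument proceeds in three layers.

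\emph{Layer 1: Closed-string cancellation.} Lemma \ref{cyclic_brace_with_closed_string_action_lem} applied to $\widehat{\ml}(D\{E,\Delta\})$ yields two terms $\widehat{\ml}(D)\{E,\Delta\}$ and $D\{\widehat{\ml}(E),\Delta\}$ (with appropriate signs). These are designed to cancel exactly the $\widehat{\ml}$-pieces contributed respectively by $(\delta D)\{E,\Delta\}$ and by $(-1)^{|D|}D\{\delta E,\Delta\}$, provided the signs match. After this layer, only $\q$-brace terms remain.

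\emph{Layer 2: Brace expansion and pairwise cancellation.} Theorem \ref{cyclic_brace_relation_thm}(ii) expands $(D\{E,\Delta\})\{\q\}$ into four first-order cyclic braces $D\{E,\q,\Delta\}$, $D\{E\{\q\},\Delta\}$, $D\{\q,E,\Delta\}$, $D\{E,\Delta,\q\}$; Theorem \ref{cyclic_brace_relation_thm}(i) expands $D\{\q\}\{E,\Delta\}$ into the same four types together with $D\{\q\{E\},\Delta\}$, and the second-order terms $D\{\q\{E,\lozenge\}\}$ and $D\{E,\q\{\lozenge\}\}$; and Proposition \ref{cyclic_brace_with_omega_cyclic_prop} converts $\q\{D\{E,\Delta\}\}$ into $\pm D\{E,\q\{\lozenge\}\}$. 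Assembling all contributions from $\delta(D\{E,\Delta\})$, $(\delta D)\{E,\Delta\}$, and $(-1)^{|D|}D\{\delta E,\Delta\}$, the four common first-order types cancel pairwise, the two occurrences of each of $D\{\q\{E\},\Delta\}$, $D\{E\{\q\},\Delta\}$, and $D\{E,\q\{\lozenge\}\}$ cancel, and the residue is
\[
\q\{D\}\{E,\Delta\} - (-1)^{|D|}D\{\q\{E,\lozenge\}\}.
\]

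\emph{Layer 3: Collapsing the residue.} Applying the second identity in Proposition \ref{cyclic_brace_with_omega_cyclic_prop} to $\q\{D\{\Delta\},E\}=\q\{\Delta D,E\}$ (with $s=1$, $n=2$) produces $\q\{\Delta D,E\}=(-1)^{|D|}D\{\q\{E,\lozenge\}\}$, so the second residue term equals $-\q\{\Delta D,E\}$. It remains to identify $\q\{D\}\{E,\Delta\}$ with $D\{E\}-\q\{D\{\lozenge\},E\}$. Expanding $\q\{D\}\{E,\Delta\}$ by Theorem \ref{cyclic_brace_relation_thm}(i), every term of the form $\q\{\cdot,\Delta\}$ with two or more entries beside $\Delta$ vanishes by Lemma \ref{q_first_order_brace_lem}(3); the term $\q\{D\{E\},\Delta\}$ collapses to $D\{E\}$ by Lemma \ref{q_first_order_brace_lem}(2); the term $\q\{D\{E,\lozenge\}\}$ vanishes by Lemma \ref{q_second_order_brace_lem}; and the term $(-1)^{|D||E|}\q\{E,D\{\lozenge\}\}$ equals $-\q\{D\{\lozenge\},E\}$ by the swap identity of Lemma \ref{q_second_order_brace_lem}. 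Rearranging yields the claimed identity.

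The principal obstacle is Koszul sign bookkeeping. Every expansion, every instance of cyclicity, and the closed-string-action distribution carries signs that depend on $|D|$, $|E|$, $|\q|=|\ml|=1$, and on the combinatorics of open/closed partitions. The conceptual cancellation structure is transparent, but checking that each term produced by one expansion is annihilated with the correct sign by its partner produced from another is the delicate arithmetic. No new identity beyond those already established is required.
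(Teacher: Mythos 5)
Your proposal is correct and follows essentially the same route as the paper's proof: expand the three $\delta$-terms via (\ref{delta_ocha_eq}), apply the cyclic brace relations of Theorem \ref{cyclic_brace_relation_thm}, the cyclicity identity of Proposition \ref{cyclic_brace_with_omega_cyclic_prop}, the closed-string-action formula of Lemma \ref{cyclic_brace_with_closed_string_action_lem}, and the unital reductions of Lemmas \ref{q_first_order_brace_lem}--\ref{q_second_order_brace_lem}, arriving at the same residue $\q\{D\}\{E,\Delta\} - (-1)^{|D|}\,D\{\q\{E,\lozenge\}\}$ and collapsing it exactly as the paper does. One cosmetic slip in your bookkeeping: the expansion of $D\{\q\}\{E,\Delta\}$ contains $D\{\q\{E\},\Delta\}$ but not $D\{E\{\q\},\Delta\}$ --- the latter appears only in $(D\{E,\Delta\})\{\q\}$ and cancels against the $E\{\q\}$-piece of $(-1)^{|D|}D\{\delta E,\Delta\}$ --- but this does not affect your stated residue or the conclusion.
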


\begin{proof} 
	Since $\q$ is cyclic, it follows from Proposition \ref{cyclic_brace_with_omega_cyclic_prop} that
	\begin{equation*}
		\label{proof_BV_1}
		\q\{D\{E,\Delta\} \} = (-1)^{|D|+|E|} D\{E, \q\{\lozenge\}\}
	\end{equation*}
	By the type (ii) of the cyclic brace relations (Theorem \ref{cyclic_brace_relation_thm}), we obtain
	\begin{align*}
		\label{proof_BV_2}
		D\{E,\Delta\} \{\q\} = D\{E, \Delta, \q\} + D\{E,\q,\Delta\} + (-1)^{|E|} D\{ \q, E,\Delta\} + D\{ E\{\q\} , \Delta\}
	\end{align*}
	Due to the type (i) of the cyclic brace relations (Theorem \ref{cyclic_brace_relation_thm}), we also get
	\begin{align*}
		\q\{D\}\{E,\Delta\} 
		&=  \q\{D, E, \Delta\}    +(-1)^{|D||E|}  \q\{E, D, \Delta\}  
		+(-1)^{|D||E|}  \q\{E, \Delta, D\}  \\
		&
		+ \ \q\{D\{E\}, \Delta\} 
		+ (-1)^{|D||E|} \q\{E, D\{\lozenge\}\} 
		+ \q\{D\{E,\lozenge\}\}
		\\[1em]
		D\{\q\}\{E,\Delta\}
		&
		=D\{\q, E, \Delta\} + (-1)^{|E|} D\{E,\q,\Delta\} + (-1)^{|E|} D\{E,\Delta, \q\}  \\
		&+ D\{\q\{E\}, \Delta\} + (-1)^{|E|} D\{E, \q\{\lozenge\}\} + D\{\q\{E,\lozenge\}\}
	\end{align*}
By Lemma \ref{cyclic_brace_with_closed_string_action_lem}, we have
$
		\widehat{\ml} (D\{E,\Delta\}) =
		(-1)^{|E|} \widehat{\ml }(D) \{E, \Delta\} + D\{\widehat{\ml}(E), \Delta\} 
$
	for the corresponding closed string actions.
	Considering the definition of $\delta$ in (\ref{delta_ocha_eq}) and recalling $|D\{E,\Delta\}|=|D|+|E|-1$,
	the above five equations with a direct computation implies that
	\begin{align*}
		&\hspace{-1em} \delta\left( D\{E,\Delta\} \right) + (\delta D)\{E,\Delta\}  + (-1)^{|D|} \ D\{ \delta E, \Delta\}  \\
		&=
		\q\{D\{E\}, \Delta\} +\q\{D, E,\Delta\} + (-1)^{|D||E|} \q\{E, D,\Delta\} +(-1)^{|D||E|} \q\{E,\Delta, D\} \\
		& \qquad\qquad\qquad\qquad \qquad\qquad\qquad\qquad +(-1)^{|D||E|} \q\{E, D\{\lozenge\}\} 
		+ \q\{D\{E,\lozenge\}\}
		+ (-1)^{|D|-1} D\{\q\{E,\lozenge\}\} 
	\end{align*}
	By Lemma \ref{q_first_order_brace_lem}, we have \[
	\q\{D\{E\}, \Delta\}=D\{E\}
	\]
	and \[
	\q\{D,E,\Delta\}=\q\{E,D,\Delta\}=\q\{E,\Delta,D\}=0
	\]
	By Proposition \ref{cyclic_brace_with_omega_cyclic_prop}, we obtain \[
	\q\{\Delta D, E\} = \q\{D\{\Delta\}, E\}=(-1)^{|D|} \ D\{\q \{E,\lozenge\}\}
	\]
	By Lemma \ref{q_second_order_brace_lem}, we know \[
	\q\{D\{E,\lozenge\}\}=0
	\] and 
	\[
	\q\{D\{\lozenge\}, E\} = (-1)^{1+|D||E|} \ \q\{E, D\{\lozenge\}\}
	\]
	Putting them together, the above expression equals to
$D\{E\} -\q\{\Delta D, E\} - \q\{D\{\lozenge\}, E\}$.
\end{proof}

On the other hand, we use the cyclic braces to study $\Delta (D\smallsmile E)$ at the Hochschild cochain level as follows:

\begin{lem}
	\label{D{E}_2_lem}
	One has
	\[
	\Delta(\q\{D,E\}) =
	\q\{D\{\lozenge\}, E\} -(-1)^{|D||E|} \ \q\{ E\{\lozenge\}, D \}
	\]
\end{lem}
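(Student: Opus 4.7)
The plan is to begin from the tautological identity $\Delta(\q\{D,E\}) = \q\{D,E\}\{\Delta\}$ and apply the first type of cyclic brace relation in Theorem \ref{cyclic_brace_relation_thm}(i), specialized to $m=2$, $(E_1,E_2)=(D,E)$, and $n=0$. With no $F$-inputs, every sign exponent $t$ in the stated formula is an empty sum, hence $t=0$, and the only admissible indices are $i_p=j_p=0$. The first sum then contributes three first-order cyclic brace terms, one for each $s\in\{0,1,2\}$:
\[
\q\{\Delta, D, E\}\;,\quad \q\{D,\Delta,E\}\;,\quad \q\{D,E,\Delta\},
\]
while the second sum contributes two second-order cyclic brace terms, for $s\in\{1,2\}$:
\[
\q\{D\{\lozenge\}, E\}\;,\quad \q\{D, E\{\lozenge\}\}.
\]

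Next, I would eliminate the three first-order terms using Lemma \ref{q_first_order_brace_lem}(3): because the OCHA is cyclic and unital, any expression of the form $\q\{D_1,\dots,D_s,\Delta,D_{s+1},\dots,D_n\}$ with $n\geqslant 2$ vanishes. This leaves
\[
\Delta(\q\{D,E\}) = \q\{D\{\lozenge\}, E\} + \q\{D, E\{\lozenge\}\}.
\]
Finally, I would invoke Lemma \ref{q_second_order_brace_lem} with the roles of $D$ and $E$ interchanged, yielding $\q\{E\{\lozenge\}, D\} = (-1)^{1+|D||E|}\,\q\{D, E\{\lozenge\}\}$, equivalently $\q\{D, E\{\lozenge\}\} = -(-1)^{|D||E|}\,\q\{E\{\lozenge\}, D\}$. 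Substituting produces exactly the stated identity.

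The potential obstacle is sign bookkeeping, but here it is essentially trivial: the relevant $t$ in Theorem \ref{cyclic_brace_relation_thm}(i) is a sum over $F$-degrees, which vanishes for $n=0$, so no delicate cancellation arises in the expansion step. The real content has already been absorbed into the auxiliary results of Section \ref{s_cyclic_brac_with_cyclic_cochain}, namely the cyclic brace relations together with the cyclic/unital vanishing properties of $\q$. Once those are granted, the lemma is a short three-line deduction.
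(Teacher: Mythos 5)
Your proposal is correct and follows essentially the same route as the paper: expand $\q\{D,E\}\{\Delta\}$ via Theorem \ref{cyclic_brace_relation_thm}(i) with $n=0$, kill the three first-order terms by Lemma \ref{q_first_order_brace_lem}(3), and convert $\q\{D,E\{\lozenge\}\}$ using Lemma \ref{q_second_order_brace_lem}. Nothing is missing.
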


\begin{proof}
	By Theorem \ref{cyclic_brace_relation_thm} and Lemma \ref{q_first_order_brace_lem}, we obtain
	\begin{align*}
		\Delta(\q\{D,E\})
		&=
		\q\{D,E\}\{\Delta\} = \q\{D, E,\Delta\}+\q\{D,\Delta, E\} +\q\{\Delta, D, E\} +\q\{D\{\lozenge\}, E\} +\q\{D,E\{\lozenge\}\} \\
		&=
		\q\{D\{\lozenge\}, E\} +\q\{D, E\{\lozenge\}\}
	\end{align*}
	Further using Lemma \ref{q_second_order_brace_lem} yields the result.
\end{proof}

\begin{proof}[Proof of Theorem \ref{[DE]_thm}]
	Use Lemma \ref{D{E}_lem} and \ref{D{E}_2_lem}.
\end{proof}

\subsection{Proof of Theorem \ref{main_thm_intro}}
First, we observe that both the braces and the cyclic braces preserve the normalized conditions, the brace relations in Lemma \ref{brace_brace_lem} still hold within $\overline{C}^{\bullet,\bullet}(Z; A, A)$. Therefore, using almost the same argument as in
\cite{Yuan_ocha_hochschild}, we can establish the Gerstenhaber algebra structure on the normalized open-closed Hochschild cohomology (Section \ref{s_normalized_Hochshild}), where the cup product and the Gerstenhaber bracket are also defined at the cochain level by $D\smallsmile E=\q\{D, E\}$ and $[D, E]=D\{E\} - (-1)^{|D||E|} E\{D\}$.
The BV algebra structure is established by Proposition \ref{Delta_square_zero_prop} and Theorem \ref{[DE]_thm}.

\bibliographystyle{alpha}
\bibliography{mybib_bv}	

\end{document}